 \patchcmd\Gread@eps{\@inputcheck#1 }{\@inputcheck"#1"\relax}{}{}
\newtheorem{theorem}{Theorem}[section]
\newtheorem{lemma}[theorem]{Lemma}
\newtheorem{proposition}[theorem]{Proposition}
\newtheorem{remark}[theorem]{Remark}
\newcommand{\vG}{G}
\newcommand{\ball}{\mathbb{D}}
\newcommand{\eC}{\mathscr{C}}
\newcommand{\R}{\mathbb{R}}
\newcommand{\Z}{\mathbb{Z}}
\newcommand{\Prob}[1]{\mathbb{P}\left(#1\right)}
\newcommand{\PC}[2]{\mathbb{P}\left(#1\left|#2\right)\right.}
\newcommand{\abs}[1]{\left|#1\right|}
\newcommand{\lL}{\left\{}
\newcommand{\rL}{\right\}}
\newcommand{\lP}{\left(}
\newcommand{\rP}{\right)}
\newcommand{\lC}{\left[}
\newcommand{\rC}{\right]}
\newcommand{\dist}[2]{\mbox{dist}\left(#1,#2\right)}
\newcommand{\integral}[4]{\int_{#1}^{#2} \int_{#3}^{#4} r_A^3 dr_A d\beta_A}
\begin{document}

\title{ Angle distribution of two random chords\\ in the disc: A sine law}
\keywords{Geometric Probability, \and Random angle, \and  Sine law}

\author{Jesus Igor Heberto Barahona Torres}
\address{
Unidad Cuernavaca del Instituto de Matem\'aticas. Universidad Nacional Aut\'onoma de M\'exico.}
\email{igor@im.unam.mx}

\author{Paulo Cesar Manrique-Mir\'on}
\address{
C\'atedra CONACyT -- Unidad Cuernavaca del Instituto de Matem\'aticas. Universidad Nacional Aut\'onoma de M\'exico.}
\email{paulo.manrique@im.unam.mx}

\author{Erick Trevi\~{n}o-Aguilar}
\address{
Unidad Cuernavaca del Instituto de Matem\'aticas. Universidad Nacional Aut\'onoma de M\'exico.}
\email{erick.trevino@im.unam.mx }

\date{\today}

\maketitle
\markboth{Angle distribution of two random chords in the disc: A sine law}{Igor Barahona, Paulo Manrique, Erick Trevi\~no}

\begin{abstract}
Motivated by models in engineering and also biology we determine in closed form the probability density function of the angle shaped by two random chords in a fixed disc. Our main result focus on the event in which the intersection locates inside the fixed disc and establishes a sine law.
\end{abstract}

\section{Introduction}
\label{intro}
The beginning of Geometric Probability can be traced back at least to Buffon's needle problem posed by Georges--Louis Leclerc, Comte de Buffon \cite{1997introduction,Seneta2001,solomon1978geometric}. This problem asks for the probability that a needle of length $l$ dropped at random intersects a set of parallel lines distributed uniformly at a fixed distance $d$ from each other. The solution depends on $d$ and $l$ and in the  special case  $d>l$  the probability is given by  $\frac{2l}{\pi d}$. Although this was not the original motivation, the solution has been used since then as physical mechanism to approximate $\pi$'s value. 

Recent developments on computer science and wireless communications have opened a new niche for studies related to Geometric Probability with applications in several fields such as biology, engineering, and transportation; see e.g.,  \cite{8362874,7996566,Orozco}. In particular, the so--called random waypoint model commonly appears in  wireless communication networks studies; see e.g., \cite{Bettstetter2004}. The analysis of this random model usually has been done by simulations since closed formulas for probability densities are usually unavailable, and some papers investigate the performance of such numerical approach. Thus, models in which closed formulas are obtained for probability densities are interesting. We give a few examples where such formulas exist, before presenting the main contribution of the paper.\\

A core object analyzed in Geometric Probability is the random angle between different random objects as in Morton's generalization of Buffon's needle problem \cite{carleton2015microscale,cloherty2016sensory,Laurent-Gengoux2019,morton1966expected}. Here, sets of parallel lines are generalized to random sets of rectifiable curves in the plane.  Under some mild assumptions, Morton obtained (for details see \cite{solomon1978geometric}) a \textit{sine law} and showed that the density of the intersection angle between the tangents of any curve is 
\[
\frac{1}{2} \sin\lP\theta\rP\mathds{1}_{[0,\pi]}(\theta).
\]
Cai et al. \cite{cai2013distributions} studied the asymptotic behavior of pairwise angles among $n$ random and uniformly distributed unit vectors in $\R^p$ when $n$ goes to infinity and the dimension $p$ is either fixed or growing with $n$. They showed that the empirical distribution of the angles among the $n$ unit vectors converges weakly to the distribution with density 
\[
\frac{1}{\sqrt{\pi}} \frac{\Gamma\lP\frac{p}{2}\rP}{\Gamma\lP\frac{p-1}{2}\rP} \lP\sin(\theta)\rP^{p-2}\mathds{1}_{[0,\pi]}(\theta),
\] 
with probability one when $p$ is fixed.

Barton et al. \cite{10.2307/2333743} studied  chromosome interactions in human cells. In their model cell's nucleus is represented as a circle and  chromosomes as points randomly dropped inside that circle; see also Solomon \cite{solomon1978geometric}.  Interactions between chromosomes are determined by  intersections of random chords and the angles at which they intersect. The density in such models is always an expression involving the trigonometric \textit{sine function}.


Now we explain the main goal and contribution of the paper.  Our starting motivation is the afore mentioned model for wireless networks, for which explicit formulas for densities are non available, as we mentioned before. We propose a simple model in analogy with the random mechanism  in the wireless network model and for which we are able to give a closed form for the density function of a random angle. In this goal, we obtain a sine law identical to that of Morton's model.  Indeed, in spite of being different models for different purposes they intersect in their main object: a random angle. Our result contributes to Geometric Probability and its applications, and provides also another way to estimate $\pi$'s value.\\

After this introduction, the paper is organized as follows. In Section \ref{sec:mainresult}, we present our model and main Theorem \ref{thm:Main}. Section \ref{secProof} is dedicated to Theorem \ref{thm:Main}'s  proof. The proof itself is given in Subsection \ref{subsec:TheProof},  with the lengthy computation of a definite integral  deferred to Appendix \ref{appSecondIntegral}. The previous two subsections \ref{subs:case1} and \ref{subs:case2} introduce necessary elements and formulas used in the proof of Theorem \ref{thm:Main}. However, the propositions there are not directly linked. Section \ref{densityThetabyConvolution} concludes with a problem for future work.
\section{Main result}\label{sec:mainresult}
We denote by $\|\cdot\|$ the usual Euclidean norm in $\R^2$. Let $\ball=\lL x\in \R^2 \mid \| x\|\leq 1\rL$. We start by  introducing the model for a  random angle between two chords in the unitary disc. The model consists of the following objects:
\begin{itemize}
\item A chord  $s_1:=\overline{AB}$ determined by two independent points $A$ and $B$ with uniform distribution on the disc.  
\item An horizontal chord $s_2$ with random height $Y$ of uniform distribution on $[-1,1]$. We take $s_1$ independent from $s_2$.
\item $z$ is the intersecting point of the lines determined by $s_1$ and $s_2$.  Special interest is on  the event  where the intersection lies inside the disc:  
\begin{equation}\label{eq:EventC}
\eC:=\{z \in \ball\}.
\end{equation}
\item $\Theta$ is the angle between the lines determined by $s_1$ and $s_2$ measured counterclockwise.
\end{itemize}

The next theorem is the  main result of the paper. Here the exact density of $\Theta$ conditioned on $\eC$ is determined.
\begin{theorem}\label{thm:Main}
The conditional density of the angle  $\Theta$ conditional on $\eC$  is given by
\[
\frac{1}{2} \sin\lP\theta\rP \mathds{1}_{[0,\pi]}(\theta).
\]
In particular, $\Prob{\eC} = \frac{256}{45\pi^2}$.
\end{theorem}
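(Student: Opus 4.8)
The plan is to reduce everything to the supporting line of $s_1$, parametrized by the angle $\Theta\in[0,\pi)$ it makes with the horizontal (this is the angle between the two lines) and its signed distance $p$ from the centre, i.e.\ the normal form $x\sin\Theta-y\cos\Theta=p$. First I would establish that $(\Theta,p)$ has joint density $\frac{8}{3\pi^{2}}(1-p^{2})^{3/2}$ on $[0,\pi)\times[-1,1]$. This follows from the planar Blaschke--Petkantschin change of variables $dA\,dB=|t_A-t_B|\,d\Theta\,dp\,dt_A\,dt_B$, where $t_A,t_B$ are the signed positions of $A,B$ along their common line; since $A,B\in\ball$ one has $|t_A|,|t_B|\le\sqrt{1-p^{2}}$, and integrating the uniform density $1/\pi^{2}$ against $|t_A-t_B|$ over that square gives $\frac{1}{\pi^{2}}\cdot\frac{8}{3}(1-p^{2})^{3/2}$ (and $\int_{-1}^{1}(1-p^{2})^{3/2}dp=\frac{3\pi}{8}$, so this is a genuine density). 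The positions $t_A,t_B$ never enter again: both $\Theta$ and the event $\eC$ depend on $s_1$ only through its line.

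Second, since $s_2$ is the horizontal line $\{y=Y\}$ with $Y$ uniform on $[-1,1]$ and independent of $s_1$, the angle between the two lines is exactly $\Theta$, and their intersection is $z=\big(\tfrac{p+Y\cos\Theta}{\sin\Theta},\,Y\big)$ (the set $\{\sin\Theta=0\}$ has probability zero). Hence $\eC=\{z\in\ball\}$ is the event $|p+Y\cos\Theta|\le\sqrt{1-Y^{2}}\,\sin\Theta$. Writing $Y=\cos\beta$ with $\beta\in[0,\pi]$, the admissible interval of $p$ has endpoints $-\cos(\beta\mp\Theta)$, which automatically lie in $[-1,1]$ — so no truncation against the support $|p|\le1$ is needed, which is what makes the computation clean — and its length is $2\sin\beta\sin\Theta$.

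Third, integrating out $p$ and $Y$ produces the function $g$ with $g(\theta)\,d\theta=\Prob{\Theta\in d\theta,\,\eC}$, namely
\[
g(\theta)=\frac{4}{3\pi^{2}}\int_{0}^{\pi}\sin\beta\Big(\int_{-\cos(\beta-\theta)}^{-\cos(\beta+\theta)}(1-p^{2})^{3/2}\,dp\Big)\,d\beta ,
\]
and it suffices to prove $g(\theta)=C\sin\theta$ for some constant $C$: then the conditional density is $g$ divided by $\int_{0}^{\pi}g(\theta)\,d\theta=2C$, which equals $\tfrac12\sin\theta$ on $[0,\pi]$, and $\Prob{\eC}=2C$. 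To evaluate $g$, substitute $p=-\cos\psi$ in the inner integral, rewriting it as $F(\beta+\theta)-F(\beta-\theta)$ with $F(\psi):=\int_{0}^{\psi}|\sin t|^{3}\sin t\,dt$; the integrand is odd and $2\pi$-periodic with zero integral over a period, so $F$ is even and $2\pi$-periodic, hence $F=\frac{a_{0}}{2}+\sum_{n\ge1}a_{n}\cos n\psi$ and $F(\beta+\theta)-F(\beta-\theta)=-2\sum_{n\ge1}a_{n}\sin n\theta\,\sin n\beta$. Since $\int_{0}^{\pi}\sin\beta\,\sin n\beta\,d\beta=0$ for integers $n\ge2$ and equals $\pi/2$ for $n=1$, only the first harmonic survives and $g(\theta)=-\frac{4a_{1}}{3\pi}\sin\theta$ — the sine law. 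Finally $a_{1}=-\frac{2}{\pi}\int_{0}^{\pi}\sin^{5}\psi\,d\psi=-\frac{32}{15\pi}$ by one integration by parts, so $C=\frac{128}{45\pi^{2}}$ and $\Prob{\eC}=2C=\frac{256}{45\pi^{2}}$.

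The two places where the real work sits are Step 1 — obtaining the $(\Theta,p)$ density with the correct Jacobian and normalizing constant — and Step 3, the evaluation of the trigonometric double integral for $g$. The Fourier argument makes the latter short, but it can equally be carried out by brute force, expanding the inner integral via the antiderivative of $(1-p^{2})^{3/2}$ and then integrating in $\beta$; that is presumably the lengthy appendix computation, and the proportionality to $\sin\theta$ can also be anticipated from the $-\cos(\beta\mp\theta)$ form of the integration limits. I expect the bookkeeping in whichever form of Step 3 one adopts to be the main obstacle.
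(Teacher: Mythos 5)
Your proof is correct and arrives at exactly the joint density $\Prob{\Theta\in d\theta,\,\eC}=\frac{128}{45\pi^2}\sin\theta\,d\theta$ that the paper obtains, but by a genuinely different route. The paper never passes to line coordinates: it fixes the endpoint $A=a$, computes $\Prob{\angle aB\in[\theta,\theta+\Delta\theta]}$ as $\frac{\Delta\theta}{2\pi}\left(\dist{a}{V}^2+\dist{a}{W}^2\right)$ with $V,W$ the endpoints of the chord on the circle, and expands this into three separate integrals over the disc against the indicator of $\eC$; the variable playing the role of your $p$ appears there as $\eta=R\sin(\theta-T)$ with density $\frac{2}{\pi}\sqrt{1-t^2}$, and the event $\eC$ is the same interval condition $-\sin(\theta+\rho)\le\eta\le\sin(\theta-\rho)$ you derive (the paper writes $Y=\sin\rho$ where you write $Y=\cos\beta$). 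The price of that decomposition is the middle integral $\int r_A^2\,\mathds{1}_{\eC}\,d\mu_A$, whose domain must be dissected by hand into four regimes of $\theta$ and, within each, several sub-intervals of $\beta_A$ and $\rho$ --- the entire appendix. Your two moves remove both sources of pain at once: the Blaschke--Petkantschin step integrates out the positions of $A$ and $B$ along their common line before the event $\eC$ is ever imposed, collapsing the three integrals into the single weight $\frac{8}{3}(1-p^2)^{3/2}$, and the observation that the limits $-\cos(\beta\mp\theta)$ never leave $[-1,1]$ means no truncation cases arise; the Fourier orthogonality argument then explains structurally why only the first harmonic survives, i.e.\ why the answer must be proportional to $\sin\theta$, rather than discovering this after the fact. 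The one ingredient you import is the planar Blaschke--Petkantschin formula with Jacobian $|t_A-t_B|$, which is standard but should be stated with a reference (e.g.\ Santal\'o); all your constants check out, including $a_1=-\frac{32}{15\pi}$, $C=\frac{128}{45\pi^2}$ and $\Prob{\eC}=\frac{256}{45\pi^2}$.
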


The proof of Theorem \ref{thm:Main} is organized in Section \ref{secProof}.  The proof itself is given in Subsection \ref{subsec:TheProof},  with the computation of a crucial integral  deferred to Appendix \ref{appSecondIntegral}. The previous two subsections \ref{subs:case1}  and \ref{subs:case2} are preliminary and  introduce necessary elements and formulas used in the proof. The propositions in this section, being interesting in their own are not directly linked.

\section{Proof of Theorem \ref{thm:Main}}\label{secProof}
\subsection{The angle determined by  $s_1$}\label{subs:case1}
Let $\Theta_1$ be the angle between  the chord $s_1$ and the $x$-axis. In this subsection we will determine $\Theta_1$'s distribution.    Here, and only in this subsection, we are interested in the whole distribution of $\Theta_1$, without any conditioning.   Let $\xi$ be a random variable with uniform distribution on $\ball$. Observe that  $\xi$ can be parameterized as
\begin{equation}\label{eqRT}
\xi = (R\cos(T),R\sin(T)),
\end{equation}
where $R$ and $T$ are independent random variables with densities respectively given by 
\begin{align}
f(r)&=2r\mathds{1}_{r\in[0,1]},\notag\\ 
g(\beta)&=\frac{1}{2\pi}\mathds{1}_{\beta\in[0,2\pi]}.\label{eqRTdensities}
\end{align}
\begin{proposition}
The distribution of $\Theta_1$ is uniform on $[0,\pi]$.
\end{proposition}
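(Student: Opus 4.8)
The plan is to exploit the rotational invariance of the uniform law on $\ball$ instead of computing anything. Write $s_1=\overline{AB}$ with $A,B$ independent and uniform on $\ball$. Since $A\neq B$ almost surely, the chord has a well-defined direction, and $\Theta_1$ is the argument of $B-A$ reduced modulo $\pi$ (a line determines a direction only up to $\pi$), taken in $[0,\pi)$. I will show that this reduced argument is uniform on the circle $\R/\pi\Z$.

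First I would fix an arbitrary $\varphi\in\R$ and let $\rho_\varphi$ denote the rotation of $\R^2$ about the origin by angle $\varphi$. Because the uniform distribution on $\ball$ is invariant under $\rho_\varphi$ and $A$ is independent of $B$, the pair $(\rho_\varphi A,\rho_\varphi B)$ has the same joint law as $(A,B)$. Applying $\rho_\varphi$ to both endpoints simply rotates the chord, so $\arg(\rho_\varphi B-\rho_\varphi A)=\arg(B-A)+\varphi$, and hence $\Theta_1$ and $(\Theta_1+\varphi)\bmod\pi$ have the same distribution.

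Since this identity in law holds for every $\varphi$, the law of $\Theta_1$ on $\R/\pi\Z$ is translation invariant and therefore equals the normalized Haar (Lebesgue) measure, i.e. the uniform distribution on $[0,\pi]$. If one prefers to avoid invoking uniqueness of Haar measure, the same conclusion comes from Fourier coefficients: for $n\in\Z\setminus\{0\}$ the above gives $\mathbb{E}[e^{2in\Theta_1}]=e^{2in\varphi}\,\mathbb{E}[e^{2in\Theta_1}]$ for all $\varphi$, so choosing $\varphi$ with $e^{2in\varphi}\neq 1$ forces $\mathbb{E}[e^{2in\Theta_1}]=0$, and a random variable on $\R/\pi\Z$ whose nonzero Fourier coefficients all vanish is uniform.

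I expect no genuine obstacle here; the only points requiring care are bookkeeping ones, namely that the relevant object is the chord direction modulo $\pi$ (not modulo $2\pi$, and not the acute angle) and that translation invariance on the circle pins down the uniform law. As an alternative that uses the parameterization \eqref{eqRT}--\eqref{eqRTdensities} directly, one could condition on $R$ for each endpoint and integrate over the two independent uniform angles $T$, but this leads to an integral that the symmetry argument makes unnecessary.
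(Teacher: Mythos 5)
Your argument is correct, and it is genuinely different from the paper's. The paper proves this proposition by direct computation: it conditions on $A=a$, identifies the set of points $B$ producing an angle in $[\theta,\theta+\Delta\theta]$ as two opposite circular sectors, computes $\dist{a}{V}^2+\dist{a}{W}^2 = 2\lP 1+r_A^2\cos\lP 2(\theta-\beta_A)\rP\rP$ explicitly from the intersection of the line with the unit circle, and then integrates over $a$ to find that the $\theta$-dependence washes out, leaving $\Delta\theta/\pi$. You instead observe that the joint law of $(A,B)$ is invariant under simultaneous rotation, so the law of $\Theta_1$ on $\R/\pi\Z$ is translation invariant and must be Haar measure; the Fourier-coefficient fallback correctly handles the uniqueness step, and your bookkeeping about the direction being defined only modulo $\pi$ (with $A\neq B$ almost surely) is the right level of care. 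Your route is shorter, computation-free, and more general (it applies to any rotationally invariant joint law of the endpoints, not just independent uniforms on the disc). What the paper's computational route buys, and what your argument does not supply, is the explicit integrand $2\lP 1+r_A^2\cos\lP 2(\theta-\beta_A)\rP\rP$ of equation \eqref{eqn130820191256}: that formula is reused verbatim in Subsections \ref{subs:case2} and \ref{subsec:TheProof}, where the conditioning on the event that the intersection lies in the disc destroys the rotational symmetry and a symmetry argument like yours no longer applies. So as a proof of this proposition in isolation your approach is preferable, but it would not substitute for the computation the paper needs downstream.
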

\begin{proof} 
We will also use the alternative notation $\angle AB$ for the angle $\Theta_1$ in order to emphasize the role of the  points $A$ and $B$, since we will fix one of them and let the other to be variable.
Take $\theta\in[0,\pi]$ and let $\Delta\theta$ be a small non negative number. Using conditional probability and the independency between $R$ and $T$, we have
\begin{eqnarray}\label{eqn13082019}
\Prob{\angle AB\in[\theta,\theta+\Delta\theta]} & = & \int\PC{\angle AB\in[\theta,\theta+\Delta\theta]}{A=a}d\mu_A(a) \nonumber\\
& = & \int \Prob{\angle aB\in[\theta,\theta+\Delta\theta]}d\mu_A(a),
\end{eqnarray} where $\mu_A$ denotes the distribution associated to $A$; see \eqref{eqRT} and \eqref{eqRTdensities}.  Note that when $\Delta\theta\to 0$ we have
\begin{equation}\label{eqn130820191558}
\frac{1}{\Delta\theta}\Prob{\angle aB\in[\theta,\theta+\Delta\theta]} \to \textnormal{ desired density}.
\end{equation} 
Up to a first order error we have (see Figure \ref{fig301020193}) 
\begin{equation}\label{eq:CircularSectors}
\Prob{\angle aB\in[\theta,\theta+\Delta\theta]} = \dist{a}{V}^2 + \dist{a}{W}^2 
\end{equation}

\begin{figure}[h]
\centering
\captionsetup{justification=centering}
\scalebox{0.4}{\includegraphics{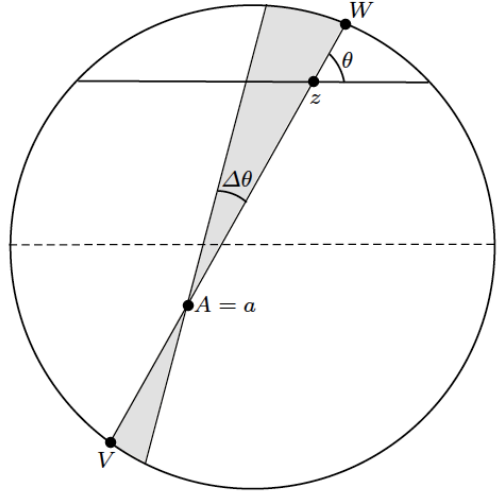}}
\caption{Points $B$ in the shadowed region determine $\Theta_1 \in [\theta,\theta + \Delta \theta]$.}\label{fig301020193}
\end{figure}

where $V,W\in\R^2$ are the solutions of the following system:
\begin{eqnarray}\label{eq:sys1}
x^2+y^2 & = & 1 \nonumber  \\
y   & = & m (x-a_1) + a_2,\\
m  &= & \tan(\theta) \nonumber
\end{eqnarray} 
with $a=(a_1,a_2)$. Explicit solutions are 
\begin{eqnarray*}
V_1 & = & \frac{m^2a_1-ma_2-\sqrt{1+m^2-\left(ma_1-a_2\right)^2}}{1+m^2}, \\
V_2 & = & -ma_1+a_2+\frac{m^3a_1}{1+m^2}-\frac{m^2a_2}{1+m^2} - \frac{m\sqrt{1+m^2-\left(ma_1-a_2\right)^2}}{1+m^2}, \\
W_1 & = & \frac{m^2a_1-ma_2+\sqrt{1+m^2-\left(ma_1-a_2\right)^2}}{1+m^2}, \\
W_2 & = & -ma_1+a_2+\frac{m^3a_1}{1+m^2}-\frac{m^2a_2}{1+m^2} + \frac{m\sqrt{1+m^2-\left(ma_1-a_2\right)^2}}{1+m^2}.
\end{eqnarray*} Also, by the system \eqref{eq:sys1}, we observe 
\begin{eqnarray*}
\dist{a}{V}^2 & = & (a_1-V_1)^2 + (a_2-V_2)^2 \\
& = &  (a_1-V_1)^2 + \left(a_2- \left(m(V_1-a_1)+a_2\right)\right)^2 \\
& = & \left(1+m^2 \right)(a_1-V_1)^2. \end{eqnarray*} Similarly, $\dist{a}{W}^2 = \left(1+m^2 \right)(a_1-W_1)^2$. Thus,
\begin{eqnarray*}
\dist{a}{V}^2 + \dist{a}{W}^2 & = & \left(1+m^2 \right)\left( (a_1-V_1)^2 + (a_1-W_1)^2\right)\\
& = & 2\frac{1+m^2+\left(1-m^2\right)\left(a_1^2- a_2^2\right) +4ma_1a_2}{1+m^2}.
\end{eqnarray*} 
The point $a$ is taken according to the law of $\xi$ so 
recalling the parameterization \eqref{eqRT},  we set $a=(r_A\cos(\beta_A),r_A\sin(\beta_A))$ and $m=\tan(\theta)$. Then
\begin{eqnarray}\label{eqn130820191256}
\dist{a}{V}^2 + \dist{a}{W}^2 & = & 2\left(1+r^2_A\cos\left(2\left( \theta - \beta_A \right)\right)\right). 
\end{eqnarray} From (\ref{eqn13082019}),  (\ref{eqn130820191558}) and (\ref{eqn130820191256}), we have 
\begin{eqnarray*}
&&\int \Prob{\angle aB\in[\theta,\theta+\Delta\theta]}  d\mu_A(a)  =  \int \frac{\Delta\theta}{2\pi}  \left(\dist{a}{V}^2 + \dist{a}{W}^2\right) d\mu_A(a)\\
 & &\hspace{1.5cm} = \frac{\Delta\theta}{\pi}  \int \left(1+r^2_A\cos\left(2\left( \theta - \beta_A \right)\right)\right) d\mu_A(a) \\
& & \hspace{1.5cm} = \frac{\Delta\theta}{\pi} \int_0^{2\pi}\int_0^1 \left(1+r^2_A\cos\left(2\left( \theta - \beta_A \right)\right)\right) \cdot \frac{1}{2\pi} \cdot 2r_A dr_Ad\beta_A \\
& & \hspace{1.5cm} = \frac{\Delta\theta}{\pi}.
\end{eqnarray*}
Thus, the distribution of the angle $\Theta_1$ is uniform on $[0,\pi]$ as claimed.
\end{proof}

\subsection{The angle of $s_1$ conditional on $\eC_1$}\label{subs:case2}
In this part the setting is similar to that of Subsection \ref{subs:case1}, but here we are interested in the distribution of $\Theta_1$ conditional to the event $\eC_1$ in which the intersection point $z$ of $s_1$ with the horizontal diameter on the $x$-axis  lies inside of the disc $\ball$. Thus, here we are interested in the following conditional probability 
\begin{equation}\label{eqn140820191416}
\PC{\Theta_1\in[\theta,\theta+\Delta\theta]}{\eC_1}.
\end{equation}

\begin{figure}[h]
\centering
\captionsetup{justification=centering}
\scalebox{0.5}{\includegraphics{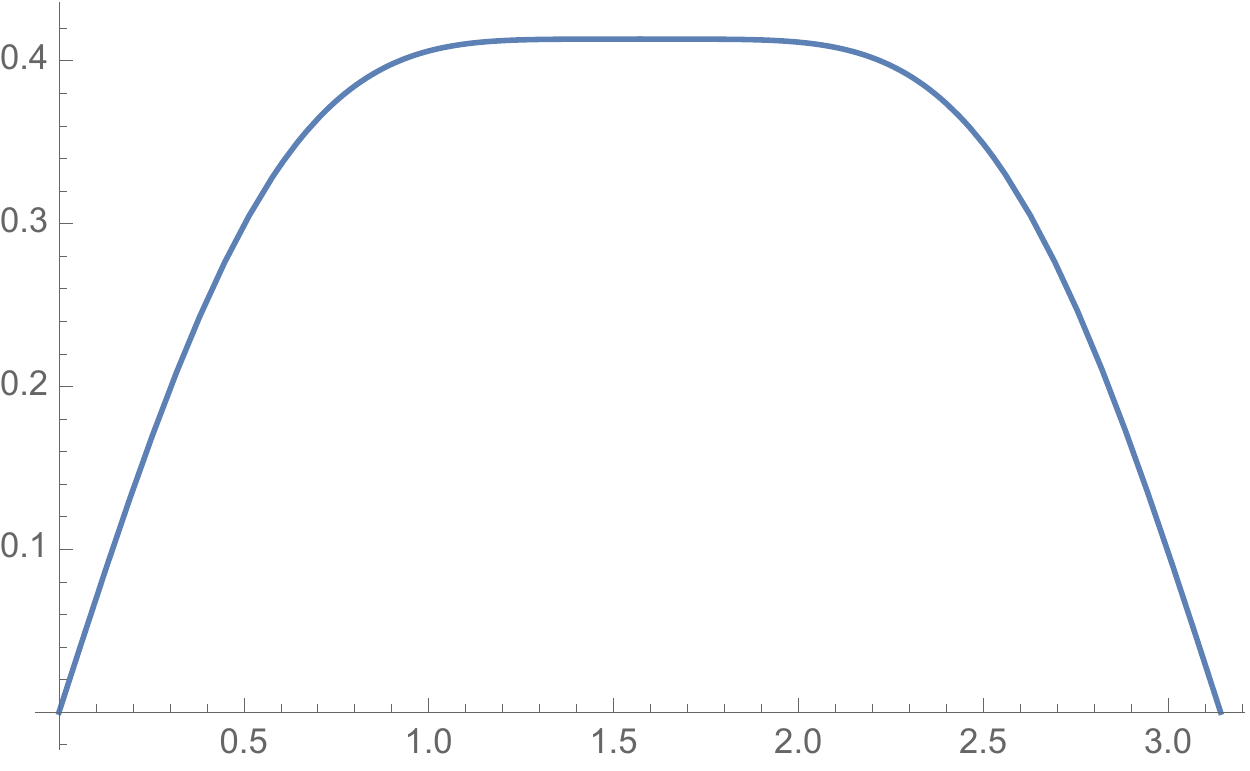}}
\caption{The density $g(\cdot)$ of $\Theta_1$ conditional on $\eC_1$.}\label{fig150820191405}
\end{figure}

In the next result, the conditional probability  \eqref{eqn140820191416} is completely characterized through a density function $g$  determined in closed form. The density function is illustrated in Figure \ref{fig150820191405}.
\begin{proposition}
Let $d(\theta) =8\sin(2\theta)+\sin(4\theta)$, $a=16+3\pi^2$  and  
\begin{equation}
g(\theta) =
\left\{
\begin{array}{ll}
\frac{1}{a} 12\theta + \frac{1}{a} d(\theta) & \mbox{\;if $\theta\in[0,\pi/2]$} \\[0.2cm]
\frac{1}{a} 12(\pi-\theta) - \frac{1}{a} d(\theta) & \mbox{\;if $\theta\in[\pi/2,\pi]$}
\end{array}.
\right. 
\end{equation}
Then, we have
\begin{equation}\label{eq:Dens2}
\PC{\Theta_1\in[\theta,\theta+\Delta\theta]}{\eC_1} =\int_{\theta}^{\theta + \Delta \theta} g(t) dt.
\end{equation}
In particular  $\Prob{\eC_1} = \frac{16+3\pi^2}{6\pi^2}$.
\end{proposition}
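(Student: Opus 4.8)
The plan is to determine first the \emph{unnormalised} density
\[
h(\theta):=\lim_{\Delta\theta\to0}\tfrac{1}{\Delta\theta}\,\Prob{\Theta_1\in[\theta,\theta+\Delta\theta],\,\eC_1},
\]
so that $\Prob{\eC_1}=\int_0^\pi h(\theta)\,d\theta$ and $g=h/\Prob{\eC_1}$; everything then reduces to computing $h$. Conditioning on $A=a$ exactly as in \eqref{eqn13082019}, the crucial observation is that $\eC_1$ depends only on the \emph{line} carrying $s_1$, hence only on $a$ and $\theta$, and \emph{not} on which of the two circular sectors around $a$ contains $B$. Therefore the first--order sector computation of Subsection~\ref{subs:case1} applies verbatim, multiplied by $\mathds{1}_{\eC_1}$: up to lower--order terms,
\[
\Prob{\angle aB\in[\theta,\theta+\Delta\theta],\,\eC_1}=\mathds{1}_{\eC_1(a,\theta)}\,\frac{\Delta\theta}{2\pi}\Bigl(\dist{a}{V}^2+\dist{a}{W}^2\Bigr),
\]
with $V,W$ from \eqref{eq:sys1} and $\dist{a}{V}^2+\dist{a}{W}^2=2\bigl(1+r_A^2\cos(2(\theta-\beta_A))\bigr)$ as in \eqref{eqn130820191256}.

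Next I would express $\eC_1$ in the polar coordinates \eqref{eqRT}. The line through $a=(r_A\cos\beta_A,r_A\sin\beta_A)$ with slope $\tan\theta$ meets the $x$--axis at the abscissa $r_A\sin(\theta-\beta_A)/\sin\theta$, so for $\theta\in(0,\pi)$ the event $\eC_1$ is exactly $r_A\lvert\sin(\theta-\beta_A)\rvert<\sin\theta$. Inserting this together with \eqref{eqn130820191256} into the integral over the law of $A$ and substituting $\phi=\theta-\beta_A$, the common period $\pi$ of $\lvert\sin\phi\rvert$ and $\cos2\phi$ reduces the $\beta_A$--integral to twice an integral over $\phi\in[0,\pi]$, giving
\[
h(\theta)=\frac{2}{\pi^2}\int_0^\pi\!\!\int_0^1\mathds{1}_{\{r\sin\phi<\sin\theta\}}\bigl(1+r^2\cos2\phi\bigr)\,r\,dr\,d\phi.
\]

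Then I would evaluate this integral. The inner $r$--integral is elementary, and the $\phi$--range splits according to whether the cap $\sin\theta/\sin\phi$ exceeds $1$, i.e.\ whether $\sin\phi\le\sin\theta$; for $\theta\in(0,\pi/2]$ this partitions $[0,\pi]$ into the two arcs $[0,\theta]\cup[\pi-\theta,\pi]$, where the $r$--integral equals $\tfrac12+\tfrac14\cos2\phi$, and the middle arc $[\theta,\pi-\theta]$, where it equals $\tfrac12R^2+\tfrac14R^4\cos2\phi$ with $R=\sin\theta/\sin\phi$. The first contribution integrates at once; the second needs the antiderivatives of $\csc^2\phi$ and $\csc^4\phi$ over $[\theta,\pi-\theta]$, followed by the simplifications $\cos2\phi=1-2\sin^2\phi$ and $\cos^2\theta-3\sin^2\theta=2\cos2\theta-1$. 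Collecting terms yields $h(\theta)=\dfrac{12\theta+d(\theta)}{6\pi^2}$ on $[0,\pi/2]$.

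Finally, the reflection $x\mapsto(-x_1,x_2)$ fixes $\ball$ and the $x$--axis, preserves the laws of $A$ and $B$, sends $\Theta_1$ to $\pi-\Theta_1$ and preserves $\eC_1$, so $h(\theta)=h(\pi-\theta)$; together with $d(\pi-\theta)=-d(\theta)$ this extends $h$ to $[\pi/2,\pi]$ in exactly the claimed piecewise shape. Integrating $h$ over $[0,\pi]$ (using the symmetry, as $2\int_0^{\pi/2}$) gives $\Prob{\eC_1}=\tfrac12+\tfrac{8}{3\pi^2}=\tfrac{16+3\pi^2}{6\pi^2}$, and $g=h/\Prob{\eC_1}$ is then precisely the stated function, establishing \eqref{eq:Dens2}. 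The main obstacle is the middle--arc step: correctly handling the split according to whether $\sin\phi\le\sin\theta$ and carrying out the $\csc^4$ integral so that it collapses to the compact form $8\sin2\theta+\sin4\theta$; the rest is a direct transcription of Subsection~\ref{subs:case1}.
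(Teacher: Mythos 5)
Your proposal is correct and its skeleton coincides with the paper's: both condition on $A=a$, invoke the first--order sector formula $\dist{a}{V}^2+\dist{a}{W}^2=2\bigl(1+r_A^2\cos(2(\theta-\beta_A))\bigr)$, identify $\eC_1$ with $r_A\abs{\sin(\theta-\beta_A)}\leq\sin\theta$ via the intercept $z=(a_1-a_2/\tan\theta,0)$, and then integrate over the disc in polar coordinates. Where you diverge is in the evaluation. The paper splits the integrand as $1+r_A^2-2r_A^2\sin^2(\theta-\beta_A)$ and treats the three resulting integrals separately: the first and third are reduced to one--dimensional integrals against the semicircle law $f_\eta(t)=\tfrac{2}{\pi}\sqrt{1-t^2}$ of $\eta=R\sin(\theta-T)$, and only the middle one ($\int r_A^2\mathds{1}_{\eC_1}\,d\mu_A$) is done by the explicit partition of the angular range according to $\abs{\sin(\theta-\beta_A)}\lessgtr\abs{\sin\theta}$. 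You instead keep the integrand whole and carry out a single direct polar computation, paying for it with the $\csc^2$ and $\csc^4$ antiderivatives on the middle arc $[\theta,\pi-\theta]$; I have checked that this yields the same $h(\theta)=\frac{12\theta+d(\theta)}{6\pi^2}$ on $[0,\pi/2]$, the same $\Prob{\eC_1}=\frac{16+3\pi^2}{6\pi^2}$, and hence the stated $g$. Your reflection argument $x\mapsto(-x_1,x_2)$ for extending to $[\pi/2,\pi]$ is also cleaner than the paper's, which recomputes the middle integral on that range using a second set of elementary inequalities. What the paper's route buys is reusability: the law of $\eta$ and the formula for $\int t^2\,d\mu_\eta$ are exactly the ingredients recycled in Subsection \ref{subsec:TheProof} for the first and third integrals of the main theorem, whereas your single--integral computation is more self--contained but would have to be redone there.
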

\begin{proof}
By the definition of conditional probability, we have
\[
\PC{\Theta_1\in[\theta,\theta+\Delta\theta]}{\eC_1} = \frac{\Prob{\{ \Theta_1\in[\theta,\theta+\Delta\theta]\} \cap \eC_1}}{\Prob{\eC_1}}.
\] 
We will focus on calculating
\[
\Prob{\Theta_1\in[\theta,\theta+\Delta\theta] , \eC_1}:=\Prob{\{ \Theta_1\in[\theta,\theta+\Delta\theta]\} \cap \eC_1},
\] 
and then, $\Prob{\eC_1}$ will be determined together with the desired conditional probability and its density.  Note that \eqref{eqn130820191256} is equal to $2\left(1+r^2_A-2r^2_A\sin^2\left(\theta-\beta_A\right)\right)$. The same logic as in \eqref{eqn13082019}, \eqref{eqn130820191558}, and \eqref{eqn130820191256} yields 
\begin{eqnarray} \label{eqn140820191443}
& & \Prob{\Theta_1\in[\theta,\theta+\Delta\theta],\eC_1}  =  \int \PC{\Theta_1^A\in[\theta,\theta+\Delta\theta],\eC_1}{A=a} d\mu_A(a)   \nonumber\\
& & \hspace{2cm}  = \int \Prob{\Theta_1^a \in[\theta,\theta+\Delta\theta],\eC_1} d\mu_A(a) \nonumber\\
& & \hspace{2cm}  = \int \frac{\Delta\theta}{2\pi}\left(\dist{a}{V}^2 + \dist{a}{W}^2\right) \mathds{1}_{\eC_1} d\mu_A(a) \nonumber\\
& & \hspace{2cm}  = \frac{\Delta\theta}{\pi} \int \left(1+r^2_A-2r^2_A\sin^2\left(\theta-\beta_A\right)\right)  \mathds{1}_{\eC_1} d\mu_A(a). \nonumber \\
& &
\end{eqnarray} 
Note  that $z$ is the solution of the following system
\begin{eqnarray}\label{eq:sys2}
y & = & 0  \nonumber \\
y   & = & m (x-a_1) + a_2\\
m  & = & \tan(\theta). \nonumber
\end{eqnarray} 
The unique solution is given by  $z=\left( a_1 - \frac{a_2}{m}, 0\right)$. Using again the parameterization $a=(r_A\cos(\beta_A),r_A\sin(\beta_A))$ we get
\begin{equation}\label{eqn140820191502}
\|z\|^2 = \left(r_A \cos(\beta_A)-\frac{r_A\sin(\beta_A)}{\tan(\theta)}\right)^2 
= \frac{r^2_A\sin^2\left(\theta -\beta_A\right)}{\sin^2\left( \theta \right)}.
\end{equation} 
Using the linearity of integral, we see that the last integral in (\ref{eqn140820191443}) can be calculated in three parts as follows:
\begin{eqnarray}\label{eqn140820191729}
&& \int \left(1+r^2_A-2r^2_A\sin^2\left(\theta-\beta_A\right)\right)  \mathds{1}_{\{\|z_a\|^2\leq 1\}} d\mu_A(a) = \nonumber\\
&&\hspace{2cm} = \int \mathds{1}_{\{ \|z_a\|^2\leq 1\}} d\mu_A(a)  +  \int r_A^2 \mathds{1}_{\{\|z_a\|^2\leq 1\}} d\mu_A(a) \nonumber \\
&&\hspace{2.3cm} -2  \int r_A^2\sin^2 \left(\theta - \beta_A\right) \mathds{1}_{\{\|z_a\|^2\leq 1\}} d\mu_A(a). 
\end{eqnarray} 
For the first integral in the right hand side of \eqref{eqn140820191729}, using the expression \eqref{eqn140820191502} we have
\[
\int \mathds{1}_{\{\|z_a\|^2\leq 1\}} d\mu_A(a) = \int \mathds{1}_{\{\abs{t}\leq \sin(\theta)\}} d\mu_\eta(t),
\] 
where $\mu_\eta$ is the distribution associated to the random variable \begin{equation}\label{eq:eta}
\eta:=R\sin\left(\theta - T\right).
\end{equation}
Hence, if we calculate the law of $\eta$, the previous integral is straightforward. It is well known that if $T$ is uniform on $[0,2\pi]$, then the density of $\sin\left(\theta - T\right)$ is the arc sine law for all $\theta$, i.e., its density is
\begin{equation*}
h(t)=\frac{1}{\pi\sqrt{1-t^2}}\mathds{1}_{\{ \abs{t}<1 \}}.
\end{equation*}
Note that $R$ and $\sin(\theta - T)$ are independent random variables. Thus, the density of $\eta$ can be calculated using the convolution formula
\[ 
f_\eta(t)=\int_{-\infty}^{\infty} f(u)h\left( \frac{t}{u}\right)\frac{1}{\abs{u}} du.
\]
In our case,we obtain
\begin{equation}\label{eqn150820191214}
f_\eta(t)=\frac{2}{\pi}\sqrt{1-t^2}\mathds{1}_{\{ \abs{t}<1 \}}.
\end{equation} 
Thus, 
\begin{eqnarray}
\int \mathds{1}_{\{\abs{t}\leq \sin(\theta)\}} d\mu_\eta(t) & = & \int_{-\sin(\theta)}^{\sin(
\theta)} \frac{2}{\pi}\sqrt{1-t^2} dt \nonumber \\
& = & \frac{2}{\pi} \left(\arcsin(\sin(\theta)) + \sin(\theta)\sqrt{1-\sin^2(\theta)}\right). 
\end{eqnarray}
For the second integral in the right hand side of \eqref{eqn140820191729}, we consider two parts. For the first, take $\theta\in[0,\pi/2]$.  We make use of the elementary inequalities:
\begin{eqnarray}\label{eq:Elementary1}
\left| \sin(x) \right| \geq  \left| \sin(\theta) \right|   & \mbox{for}  & x \in [-\pi+\theta,-\theta] \cup [\theta,\pi-\theta] \cup [\pi + \theta,2\pi-\theta] \nonumber\\ [1mm]
\left| \sin(x) \right| \leq  \left| \sin(\theta) \right|   &\mbox{for}  & x \in [-\pi,-\pi+\theta] \cup [-\theta,\theta] \cup [\pi - \theta,\pi+\theta].
\end{eqnarray} 
Writting $r_A^2=\gamma$ we have
\begin{eqnarray*}
&& \int r_A^2 \mathds{1}_{\{ \|z_a\|^2\leq 1\}} d\mu_A(a) = \int \int r^2_A \mathds{1}_{ \{r^2_A\sin^2(\theta-\beta_A)\leq \sin^2(\theta)\}}  d\mu_{R_A}(r_A) d\mu_{T}(\beta_A) \\
&&\hspace{1cm} = \int_0^{2\pi} \int_0^1 \gamma \mathds{1}_{\{\gamma\sin^2(\theta-\beta_A)\leq \sin^2(\theta)\}}  \frac{1}{2\pi}d\gamma d\beta_A \\
&&\hspace{1cm} = \int_0^{2\theta} \left[\int_0^1 \frac{\gamma}{2\pi} d\gamma\right] d\beta_A + \int_{2\theta}^{\pi} \left[\int_0^{\frac{\sin^2(\theta)}{\sin^2(\theta-\beta_A)}} \frac{\gamma}{2\pi} d\gamma\right] d\beta_A \\
&&\hspace{1.5cm} + \int_{\pi}^{2\theta+\pi} \left[\int_0^1 \frac{\gamma}{2\pi} d\gamma\right] d\beta_A + \int_{2\theta+\pi}^{2\pi} \left[\int_0^{\frac{\sin^2(\theta)}{\sin^2(\theta-\beta_A)}} \frac{\gamma}{2\pi} d\gamma\right] d\beta_A \\
&&\hspace{1cm} = \frac{\theta}{\pi} + \frac{\left(2-\cos(2\theta)\right)\sin(2\theta)}{6\pi}.
\end{eqnarray*}
For the second, take $\theta\in[\pi/2,\pi]$. The analogous inequalities to  \eqref{eq:Elementary1} in the new region are
\begin{eqnarray}\label{eq:Elementary2}
\left| \sin(x) \right| \geq  \left| \sin(\theta) \right|   & \mbox{for} & x \in [-\theta,-\pi+\theta] \cup [\pi-\theta,\theta] \cup [2\pi-\theta,\pi + \theta], \nonumber\\ [1mm]
\left| \sin(x) \right| \leq  \left| \sin(\theta) \right|   & \mbox{for} & x \in [-\pi,-\theta] \cup [-\pi+\theta,\pi-\theta] \cup [\theta,2\pi-\theta]\cup [\pi + \theta,2\pi]. \nonumber\\
\end{eqnarray}
Now we have
\begin{eqnarray*}
&& \int r_A^2 \mathds{1}_{\{\|z_a\|^2\leq 1\}} d\mu_A(a)  = \int_0^{2\pi} \int_0^1 \gamma \mathds{1}_{\{\gamma\sin^2(\theta-\beta_A)\leq \sin^2(\theta)\}}  \frac{1}{2\pi}d\gamma d\beta_A \\
&&\hspace{1cm} = \int_0^{2\theta-\pi}\left[\int_0^{\frac{\sin^2(\theta)}{\sin^2(\theta-\beta_A)}} \frac{\gamma}{2\pi} d\gamma\right] d\beta_A  + \int_{2\theta-\pi}^{\pi} \left[\int_0^{1} \frac{\gamma}{2\pi} d\gamma\right] d\beta_A \\
&&\hspace{1.5cm} + \int_{\pi}^{2\theta} \left[\int_0^{\frac{\sin^2(\theta)}{\sin^2(\theta-\beta_A)}} \frac{\gamma}{2\pi} d\gamma\right] d\beta_A  + \int_{2\theta}^{2\pi} \left[\int_0^{1} \frac{\gamma}{2\pi} d\gamma\right] d\beta_A \\
&&\hspace{1cm} = \frac{\pi-\theta}{\pi}-\frac{\left(2-\cos(2\theta)\right)\sin(2\theta)}{6\pi}.
\end{eqnarray*} 
Thus, $ \int r_A^2 \mathds{1}_{\{\|z_a\|^2\leq 1\}} d\mu_A(a) = H(\theta)$, where $H(\theta)$ is
$$ \label{eqn150820190947}
H(\theta) = \left\{
\begin{array}{ll}
\frac{\theta}{\pi} + \frac{\left(2-\cos(2\theta)\right)\sin(2\theta)}{6\pi} & \mbox{\;if $\theta\in[0,\pi/2]$} \\[0.2cm]
\frac{\pi-\theta}{\pi}-\frac{\left(2-\cos(2\theta)\right)\sin(2\theta)}{6\pi} & \mbox{\;if $\theta\in[\pi/2,\pi]$}
\end{array}.
\right.
$$
Taking $\eta=r_A\sin(\theta-\beta_A)$. From (\ref{eqn150820191214}), we obtain for the third integral in the right hand side of (\ref{eqn140820191729}) that
\begin{eqnarray*}
& &-2  \int r_A^2\sin^2 \left(\theta - \beta_A\right) \mathds{1}_{\{ \|z_a\|^2\leq 1 \}} d\mu_A(a) = -2 \int t^2 \mathds{1}_{\{ \abs{t}\leq \sin(\theta)\} } d\mu_\eta(t) \\
& & \hspace{1cm} = -\frac{4}{\pi}\int_{-\sin(\theta)}^{\sin(\theta)} t^2 \sqrt{1-t^2} dt
= -\frac{8}{\pi} \int_{0}^{\sin(\theta)} t^2 \sqrt{1-t^2} dt\\
& & \hspace{1cm} = -\frac{1}{\pi}\left( \arcsin(\sin(\theta))-\abs{\cos(\theta)}\cos(2\theta)\sin(\theta) \right). 
\end{eqnarray*}
Now equation \eqref{eq:Dens2} follows from an easy normalization.
\end{proof}

\subsection{Horizontal chord with random height}
\label{subsec:TheProof}
In this section we conclude the proof of the main Theorem  \ref{thm:Main}. We will make use of some formulas of previous subsections, although not directly the propositions there.\\

Recall $Y$ is a random variable with uniform distribution on $[-1,1]$ representing the height of $s_2$ and independent of $s_1$. The random variable $Y$ can be expressed as $Y=\sin(\vG)$, where $\vG$ is a random variable with distribution, resp., density
\begin{equation}\label{eqn20270820191637}
\mu_{\vG}(d \rho)=f_{\vG}(\rho) d \rho=\frac{1}{2}\cos(\rho)\mathds{1}_{\left[-\frac{\pi}{2},\frac{\pi}{2}\right]}(\rho) d \rho.
\end{equation}

Recall that our goal is to explicitly calculate the density of the angle $\Theta$ between $s_1$ and $s_2$ conditional to the event in which the intersecting point $z$  lies inside of $\ball$:
\[
\PC{\Theta\in[\theta,\theta+\Delta\theta]}{\eC}.
\]
Similar to previous subsections, it will be sufficient to focus on  
\begin{equation*}
\Prob{\Theta\in[\theta,\theta+\Delta\theta], \eC}:=\Prob{\{ \Theta\in[\theta,\theta+\Delta\theta] \} \cap  \eC}.
\end{equation*}

We continue denoting by $V$ and $W$ the solutions of the system \eqref{eq:sys1}. The coordinates of the intersection point $z$ is a solution to the following system (compare with \eqref{eq:sys2})
\begin{eqnarray}\label{eq:sys3}
y   &  = & \sin(\rho) \nonumber\\
y   & =  &m (x-a_1) + a_2\\
m  & =  &\tan(\theta). \nonumber
\end{eqnarray}
 
The point $z$ is then parameterized by $\theta$, $\rho$ and $a$. The point $a$ is parameterized by $(r_A\cos(\beta_A),r_A\sin(\beta_A))$, see \eqref{eqRT} and \eqref{eqRTdensities}, so will be $z$.  Hence, the norm of $z$ (as a random variable) is explicitely given by
\begin{eqnarray}\label{eqn270820190944}
 \|z\|^2 & =  & \sin^2(\vG) + \frac{\left(\cos(\theta)\sin(\vG) +R\sin(\theta - T)\right)^2}{\sin^2(\theta)}.
\end{eqnarray}
Note that $\Theta$ is analogously parameterized and then
\begin{eqnarray}\label{eqn260820191911}
& & \Prob{\Theta\in[\theta,\theta+\Delta\theta], \eC} \nonumber\\
& & \hspace{0.5cm} =  \int \int \Prob{\Theta_{\rho, a}\in[\theta,\theta+\Delta\theta]} d\mu_{A}(a) d\mu_\vG(\rho) \nonumber \\
& &  \hspace{0.5cm} = \frac{\Delta\theta}{\pi}\left[ \int \int  \mathds{1}_{\{z_{\rho,a} \in \ball\}}   d\mu_{A}(a) d\mu_\vG(\rho) + \int \int  r^2_A\mathds{1}_{\{z_{\rho,a} \in \ball\}}   d\mu_{A}(a) d\mu_\vG(\rho)  \right. \nonumber\\
& &\hspace{1cm} \left. -\int \int  2r^2_A\sin^2\left(\theta-\beta_A\right)\mathds{1}_{\{z_{\rho,a} \in \ball\}}   d\mu_{A}(a) d\mu_\vG(\rho) \right]. 
\end{eqnarray}  
\begin{remark}
Now the proof of Theorem \ref{thm:Main} will be established after we compute explicitly each integral in the last equality  in \eqref{eqn260820191911}.  The first and third are simple. Quite surprisingly,  the second integral collapses to a very simple expression after a lengthy and complex computation resulting from the extremely complicated structure of the integration domain  determined by the event $\eC$.
\end{remark}

\begin{remark}\label{rem05102019} Note that each integral in the expression \eqref{eqn260820191911} is as a function of $\theta$ symmetric at $\pi/2$. Hence, it is sufficient to determinate the density function for $\theta\in[0,\pi/2]$.
\end{remark}

In the setting of the present subsection, after \eqref{eqn270820190944} and  \eqref{eq:eta}, the set $\eC$ takes the form 
\begin{equation}\label{eqn270820191632}
\eC=\left\{-\sin(\theta + \rho) \leq \eta \leq \sin(\theta - \rho)\right\},
\end{equation} with $\eta$ a random variable with density \eqref{eqn150820191214}. 
Note that \eqref{eqn270820191632} must be  a subset of 
\begin{align*}
\{(\theta,\rho) & \in [0,\pi]\times[-\pi/2,\pi/2]  \mid -\sin(\theta + \rho) \leq \sin(\theta - \rho)
\} \\
&=\{(\theta,\rho) \mid \sin(\theta) \cos(\rho)\geq 0\} =[0,\pi]\times[-\pi/2,\pi/2].
\end{align*}

\subsubsection{First integral}
\begin{lemma}
We have
\begin{equation*}
\int \int  \mathds{1}_{\{ \|z_{\rho,a}\|_2^2\leq 1 \} }   d\mu_{A}(a) d\mu_\vG(\rho) = \frac{8\sin(\theta)}{3\pi}.
\end{equation*}
\end{lemma}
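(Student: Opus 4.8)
The plan is to collapse the double integral to a one–dimensional integral by integrating first in $a$ and then in $\rho$. Recall from \eqref{eq:eta} that $\eta:=R\sin(\theta-T)$ is a deterministic function of $A$ and has density $f_\eta(t)=\frac{2}{\pi}\sqrt{1-t^2}\,\mathds{1}_{\{|t|<1\}}$, see \eqref{eqn150820191214}, while $\vG$ has density $\frac{1}{2}\cos(\rho)\,\mathds{1}_{[-\pi/2,\pi/2]}(\rho)$, see \eqref{eqn20270820191637}; and that by \eqref{eqn270820190944} (equivalently \eqref{eqn270820191632}) the event $\{\|z_{\rho,a}\|^2\leq 1\}$ is exactly $\{-\sin(\theta+\rho)\leq\eta\leq\sin(\theta-\rho)\}$. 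Since $A$ and $\vG$ are independent, Fubini together with the law of $\eta$ (exactly as in the lines following \eqref{eqn140820191502}) gives
\[
\int\int \mathds{1}_{\{\|z_{\rho,a}\|^2\leq 1\}}\,d\mu_A(a)\,d\mu_\vG(\rho)=\int_{-\pi/2}^{\pi/2}\frac{\cos(\rho)}{2}\left(\int_{-\sin(\theta+\rho)}^{\sin(\theta-\rho)}\frac{2}{\pi}\sqrt{1-t^2}\,dt\right)d\rho,
\]
where the inner interval is nonempty because $\sin(\theta-\rho)+\sin(\theta+\rho)=2\sin(\theta)\cos(\rho)\geq 0$, as already observed after \eqref{eqn270820191632}.

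Next I would evaluate the inner integral. The function $\Phi(u):=\frac{1}{\pi}\left(u\sqrt{1-u^2}+\arcsin u\right)$ is an odd primitive of $u\mapsto\frac{2}{\pi}\sqrt{1-u^2}$ on $[-1,1]$, so the inner integral equals $\Phi(\sin(\theta-\rho))-\Phi(-\sin(\theta+\rho))=\Phi(\sin(\theta-\rho))+\Phi(\sin(\theta+\rho))$. The substitution $\rho\mapsto-\rho$ fixes $\cos(\rho)$ and fixes this sum, hence the right–hand side above equals $\int_{-\pi/2}^{\pi/2}\cos(\rho)\,\Phi(\sin(\theta+\rho))\,d\rho$.

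By Remark \ref{rem05102019} it suffices to treat $\theta\in[0,\pi/2]$. On $[-\pi/2,\pi/2]$ one has $\Phi(\sin\phi)=\frac{1}{\pi}\left(\tfrac12\sin 2\phi+\phi\right)$, and on $[\pi/2,\pi]$ one has $\Phi(\sin\phi)=\frac{1}{\pi}\left(-\tfrac12\sin 2\phi+\pi-\phi\right)$, because there $\arcsin(\sin\phi)=\pi-\phi$ and $\sqrt{1-\sin^2\phi}=-\cos\phi$. Substituting $\phi=\theta+\rho$, so that $\phi$ runs over $[\theta-\pi/2,\theta+\pi/2]\subseteq[-\pi/2,\pi]$, splitting the integral at $\phi=\pi/2$, and writing $\cos(\rho)=\cos(\phi-\theta)=\cos\phi\cos\theta+\sin\phi\sin\theta$, reduces the whole expression to elementary integrals of $\cos\phi$, $\sin\phi$, $\phi\cos\phi$, $\phi\sin\phi$, $\sin2\phi\cos\phi$, $\sin2\phi\sin\phi$ over explicit limits (after product–to–sum on the last two). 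Summing the pieces and simplifying the trigonometry yields $\frac{8\sin(\theta)}{3\pi}$.

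The only delicate step is the last one: the $\phi$–range must be split precisely at $\pi/2$ because $\arcsin\circ\sin$ and $|\cos|$ change formula there, and one then has to check that the several elementary contributions recombine into the claimed closed form, which is routine but error–prone. As a consistency check, at $\theta=\pi/2$ the first piece yields $\frac{1}{\pi}\int_0^{\pi/2}\sin\phi\left(\tfrac12\sin2\phi+\phi\right)d\phi=\frac{4}{3\pi}$ and the second piece yields $\frac{1}{\pi}\int_{\pi/2}^{\pi}\sin\phi\left(-\tfrac12\sin2\phi+\pi-\phi\right)d\phi=\frac{4}{3\pi}$, so the total is $\frac{8}{3\pi}=\frac{8\sin(\pi/2)}{3\pi}$, as expected.
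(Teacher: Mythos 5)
Your proof is correct and follows essentially the same route as the paper: both reduce the double integral, via the density of $\eta$, to $\int_{-\pi/2}^{\pi/2}\frac{\cos\rho}{2}\bigl(\int_{-\sin(\theta+\rho)}^{\sin(\theta-\rho)}\frac{2}{\pi}\sqrt{1-t^2}\,dt\bigr)d\rho$ and then evaluate it by splitting the range exactly where $\arcsin\circ\sin$ and the absolute value of the cosine change formula. Your symmetrization $\rho\mapsto-\rho$ is a small but genuine economy over the paper's computation, which instead splits the $\rho$-range into three pieces and treats $\theta\in[0,\pi/4]$ and $\theta\in[\pi/4,\pi/2]$ separately, whereas you need only a single split at $\phi=\pi/2$.
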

\begin{proof}
Using the expressions \eqref{eqn150820191214}, \eqref{eqn20270820191637}, and \eqref{eqn270820191632}, the first integral of the right hand side of \eqref{eqn260820191911} when $\theta\in[0,\pi/4]$ is
\begin{eqnarray}\label{eqn280820191300}
& & \int \int  \mathds{1}_{\{ \|z_{\rho,a}\|_2^2\leq 1 \}}   d\mu_{A}(a) d\mu_\vG(\rho) =  \int_{-\pi/2}^{\pi/2} \left[ \int_{-\sin(\theta+\rho)}^{\sin(\theta-\rho)} \frac{2}{\pi} \sqrt{1-t^2} dt\right] \frac{1}{2}\cos(\rho) d\rho \nonumber \\
& & = \frac{1}{2\pi}\int_{-\pi/2}^{-\pi/2+\theta} \left[-\frac{1}{2}\sin(2(\theta - \rho)) + \frac{1}{2}\sin(2(\theta + \rho)) + \pi + 2\rho \right] \cos(\rho) d\rho \nonumber \\
& &\hspace{0.5cm} + \frac{1}{2\pi} \int_{-\pi/2+\theta}^{\pi/2-\theta} \left[\frac{1}{2}\sin(2(\theta - \rho)) + \frac{1}{2}\sin(2(\theta + \rho)) + 2\theta \right]  \cos(\rho) d\rho \nonumber \\
& &\hspace{0.5cm} + \frac{1}{2\pi} \int_{\pi/2-\theta}^{\pi/2} \left[\frac{1}{2}\sin(2(\theta - \rho)) - \frac{1}{2}\sin(2(\theta + \rho)) + \pi - 2\rho \right]  \cos(\rho) d\rho \nonumber \\
& & = \frac{8\sin(\theta)}{3\pi}. \nonumber
\end{eqnarray}

When $\theta\in[\pi/4,\pi/2]$, we have 
\begin{eqnarray}\label{eqn140920190907}
& & \int \int  \mathds{1}_{\{ \|z_{\rho,a}\|_2^2\leq 1\}}   d\mu_{A}(a) d\mu_\vG(\rho) =  \int_{-\pi/2}^{\pi/2} \left[ \int_{-\sin(\theta+\rho)}^{\sin(\theta-\rho)} \frac{2}{\pi} \sqrt{1-t^2} dt\right] \frac{1}{2}\cos(\rho) d\rho \nonumber \\
& & = \frac{1}{2\pi}\int_{-\pi/2}^{\theta-\pi/2} \left[-\frac{1}{2}\sin(2(\theta - \rho)) + \frac{1}{2}\sin(2(\theta + \rho)) + \pi + 2\rho \right] \cos(\rho) d\rho \nonumber \\
& &\hspace{0.5cm} + \frac{1}{2\pi} \int_{\theta-\pi/2}^{\pi/2-\theta} \left[\frac{1}{2}\sin(2(\theta - \rho)) + \frac{1}{2}\sin(2(\theta + \rho)) + 2\theta \right]  \cos(\rho) d\rho \nonumber \\
& &\hspace{0.5cm} + \frac{1}{2\pi} \int_{\pi/2-\theta}^{\pi/2} \left[\frac{1}{2}\sin(2(\theta - \rho)) - \frac{1}{2}\sin(2(\theta + \rho)) + \pi - 2\rho \right]  \cos(\rho) d\rho \nonumber \\
& & =  \frac{8\sin(\theta)}{3\pi}. \nonumber 
\end{eqnarray}

Thus, for all $\theta\in[0,\pi]$
\begin{equation}
\int \int  \mathds{1}_{\{ \|z_{\rho,a}\|_2^2\leq 1 \}}   d\mu_{A}(a) d\mu_\vG(\rho) = \frac{8\sin(\theta)}{3\pi}.
\end{equation} 
\end{proof}
\subsubsection{Third integral}
\begin{lemma}
For $\theta\in[0,\pi]$  we have
\begin{equation}\label{eqn300820191123}
2\int \int  r^2_A\sin^2\left(\theta-\beta_A\right)\mathds{1}_{\mathcal{C}}   d\mu_{A}(a) d\mu_\vG(\rho) = \frac{16 \sin(\theta)}{15\pi}.
\end{equation}
\end{lemma}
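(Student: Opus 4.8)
The plan is to exploit the same collapse of the $a$-integral that powered the First integral Lemma. For fixed $\rho$, both the integrand $r_A^2\sin^2(\theta-\beta_A)$ and the indicator $\mathds{1}_{\eC}$ depend on $a$ only through the single variable $\eta=R\sin(\theta-T)$: indeed $r_A^2\sin^2(\theta-\beta_A)=\eta^2$, while by \eqref{eqn270820191632} the event $\eC$ is exactly $\{-\sin(\theta+\rho)\le\eta\le\sin(\theta-\rho)\}$. Hence the map $a\mapsto r_A\sin(\theta-\beta_A)$ pushes $\mu_A$ forward to the law of $\eta$, whose density is \eqref{eqn150820191214}, and using the density \eqref{eqn20270820191637} of $\vG$ the left-hand side of \eqref{eqn300820191123} equals
\[
\frac{2}{\pi}\int_{-\pi/2}^{\pi/2}\cos(\rho)\left(\int_{-\sin(\theta+\rho)}^{\sin(\theta-\rho)} t^2\sqrt{1-t^2}\,dt\right)d\rho .
\]
So it suffices to show this double integral equals the claimed $\tfrac{16\sin(\theta)}{15\pi}$, and by Remark \ref{rem05102019} it is enough to treat $\theta\in[0,\pi/2]$.

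For the inner integral I would use the elementary antiderivative
\[
\int t^2\sqrt{1-t^2}\,dt=\tfrac18\arcsin(t)-\tfrac18\,t\,(1-2t^2)\sqrt{1-t^2},
\]
evaluated at $t=\sin(\theta-\rho)$ and $t=-\sin(\theta+\rho)$. Since the arguments $\theta\pm\rho$ range over $[-\tfrac{\pi}{2},\tfrac{3\pi}{2}]$, the reductions $\arcsin(\sin(\theta\pm\rho))$ and $\operatorname{sgn}(\cos(\theta\pm\rho))$ are branch-dependent, so I would split the outer $\rho$-integral over exactly the same sub-intervals as in the First integral Lemma: for $\theta\in[0,\pi/4]$ the three pieces $[-\tfrac{\pi}{2},-\tfrac{\pi}{2}+\theta]$, $[-\tfrac{\pi}{2}+\theta,\tfrac{\pi}{2}-\theta]$, $[\tfrac{\pi}{2}-\theta,\tfrac{\pi}{2}]$, and the analogous partition for $\theta\in[\pi/4,\pi/2]$. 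On each piece the inner integral simplifies to an explicit, $\arcsin$-free combination of terms such as $\theta$, $\rho$, $\sin(2(\theta\pm\rho))$ and $\sin(4(\theta\pm\rho))$.

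It then remains to integrate these explicit expressions against $\tfrac{1}{\pi}\cos(\rho)$ over each sub-interval. Every resulting summand is elementary — $\int\cos\rho\,d\rho$, $\int\rho\cos\rho\,d\rho$, and, after product-to-sum identities, $\int\sin(k(\theta\pm\rho))\cos\rho\,d\rho$ for $k\in\{2,4\}$ — so each sub-integral is obtained in closed form. Adding the three (resp. four) contributions and simplifying, all $\theta$-dependence other than $\sin(\theta)$ cancels and one is left with $\tfrac{16\sin(\theta)}{15\pi}$; the range $\theta\in[\pi/2,\pi]$ follows from the symmetry of Remark \ref{rem05102019}.

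The main obstacle is purely computational bookkeeping: tracking the correct branch of $\arcsin$ and the signs of the cosines on each sub-interval, and then carrying out and recombining the several elementary trigonometric integrals without slip — a heavier version of the calculation already done for the First integral. A shortcut worth noting is the identity $t^2\sqrt{1-t^2}=\sqrt{1-t^2}-(1-t^2)^{3/2}$: against the same $\tfrac{2}{\pi}\cos(\rho)$ weighting the $\sqrt{1-t^2}$ part reduces directly to the First integral Lemma, so only the $(1-t^2)^{3/2}$ term requires a fresh (still routine) evaluation, shortening the domain-splitting argument above.
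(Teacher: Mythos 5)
Your proposal is correct and follows essentially the same route as the paper: reduce the double integral over $a$ to a one--dimensional integral against the density $f_\eta(t)=\tfrac{2}{\pi}\sqrt{1-t^2}$ via $r_A^2\sin^2(\theta-\beta_A)=\eta^2$ and the description \eqref{eqn270820191632} of $\eC$, then split the $\rho$--integral at $\pm(\tfrac{\pi}{2}-\theta)$ (resp.\ $\pm(\theta-\tfrac{\pi}{2})$) to handle the $\arcsin$ branches and the signs of $\cos(\theta\pm\rho)$, and finish with elementary trigonometric integrations, extending to $\theta\in[\pi/2,\pi]$ by symmetry. Your closing observation that $t^2\sqrt{1-t^2}=\sqrt{1-t^2}-(1-t^2)^{3/2}$ lets the first part be recycled from the First integral Lemma is a pleasant small economy, but it does not change the substance of the argument.
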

\begin{proof}
Note that
\begin{eqnarray*}
& & 2\int \int  r^2_A\sin^2\left(\theta-\beta_A\right)\mathds{1}_{\mathcal{C}}   d\mu_{A}(a) d\mu_\vG(\rho) = 2\int_{-\pi/2}^{\pi/2} \left[ \int_{-\sin(\theta+\rho)}^{\sin(\theta-\rho)} \frac{2}{\pi}t^2 \sqrt{1-t^2} dt\right] \frac{1}{2}\cos(\rho)d\rho.
\end{eqnarray*} When $\theta\in[0,\pi/4]$, we have 
\begin{eqnarray} 
&  & \frac{2}{\pi} \int_{-\pi/2}^{\pi/2} \left[ \int_{-\sin(\theta+\rho)}^{\sin(\theta-\rho)} t^2 \sqrt{1-t^2} dt\right] \cos(\rho)d\rho \nonumber \\ 
& & = \frac{1}{4\pi} \int_{-\pi/2}^{-\pi/2+\theta}  \left[\frac{1}{4}\sin(4(\theta - \rho)) - \frac{1}{4}\sin(4(\theta + \rho)) + \pi + 2\rho \right] \cos(\rho) d\rho\nonumber \\
& & \hspace{0.5cm} + \frac{1}{4\pi} \int_{-\pi/2+\theta}^{\pi/2-\theta}  \left[-\frac{1}{4}\sin(4(\theta - \rho)) - \frac{1}{4}\sin(4(\theta + \rho)) + 2\theta \right] \cos(\rho) d\rho\nonumber \\
& &\hspace{0.5cm}  +\frac{1}{4\pi} \int_{\pi/2 - \theta}^{\pi/2}  \left[-\frac{1}{4}\sin(4(\theta - \rho)) + \frac{1}{4}\sin(4(\theta + \rho)) + \pi - 2\rho \right] \cos(\rho) d\rho\nonumber \\
& & = \frac{16\sin(\theta)}{15\pi}. \nonumber
\end{eqnarray}

When $\theta\in[\pi/4,\pi/2]$, we have 
\begin{eqnarray}\label{eqn150920191232}
& &  \frac{2}{\pi} \int_{-\pi/2}^{\pi/2} \left[ \int_{-\sin(\theta+\rho)}^{\sin(\theta-\rho)} t^2 \sqrt{1-t^2} dt\right] \cos(\rho)d\rho \nonumber \\ 
& & =  \frac{1}{4\pi}\int_{-\pi/2}^{\theta-\pi/2} \left[\frac{1}{4}\sin(4(\theta - \rho)) - \frac{1}{4}\sin(4(\theta + \rho)) + \pi + 2\rho \right] \cos(\rho) d\rho \nonumber \\
& &\hspace{0.5cm} + \frac{1}{4\pi} \int_{\theta-\pi/2}^{\pi/2-\theta} \left[-\frac{1}{4}\sin(4(\theta - \rho)) - \frac{1}{4}\sin(4(\theta + \rho)) + 2\theta \right]  \cos(\rho) d\rho \nonumber \\
& &\hspace{0.5cm} + \frac{1}{4\pi} \int_{\pi/2-\theta}^{\pi/2} \left[-\frac{1}{4}\sin(4(\theta - \rho)) + \frac{1}{4}\sin(4(\theta + \rho)) + \pi - 2\rho \right]  \cos(\rho) d\rho \nonumber \\
& & =  \frac{16\sin(\theta)}{15\pi}. \nonumber
\end{eqnarray}

Thus, for all $\theta\in[0,\pi]$
\begin{equation}\label{eqn300820191123}
2\int \int  r^2_A\sin^2\left(\theta-\beta_A\right)\mathds{1}_{\{\|z_{\rho,a}\|_2^2\leq 1\}}   d\mu_{A}(a) d\mu_\vG(\rho) = \frac{16\sin(\theta)}{15\pi}.
\end{equation}
\end{proof}

\subsubsection{Second integral}

The second integral is the most complicated of the integrals to be calculated. The difficulty comes from the structure of the event $\eC$, see \eqref{eqn270820191632}. It is necessary  to distinguish for every $\theta\in[0,\pi/2]$, every $\rho\in[-\pi/2,\pi/2]$, and every $\beta\in[0,2\pi]$  the interval where $r$ has a meaning. Thus, a careful analysis of integration intervals is required. This is done in Appendix \ref{appSecondIntegral} and as a clear consequence  we obtain for the second integral:
\begin{eqnarray}
& & \int \int  r^2_A\mathds{1}_{\{ \|z_{\rho,a}\|_2^2\leq 1 \}}   d\mu_{A}(a) d\mu_\vG(\rho) \nonumber \\
& & = \frac{1}{2\pi} \int_{-\pi/2}^{\pi/2} \left[ \int_{0}^{2\pi} \int_{0}^{1} r_A^3 \mathds{1}_{\mathcal{C}} dr_A d\beta_A\right] \cos(\rho) d\rho \nonumber\\
& & = \frac{56}{45\pi}\sin(\theta).
\end{eqnarray}

\section{Future work}\label{densityThetabyConvolution}
A natural extension to the present work is to consider segments instead of chords. It is interesting from the pure mathematical point of view and well motivated by engineering applications.

In this concluding section, we characterize the distribution of the angle with this random mechanism through  the convolution of two more simple distributions.  As the reader will witness,  it is unclear if this technique will lead to an explicit formula for the law, similar to  the statement in Theorem \ref{thm:Main}. Hence, we let for future work the study of a geometric approach for this model.\\

Consider points $A, B, C$ and $D$ independent and uniformly distributed on the disc $\ball$. Let $\phi_A, \phi_C$ be the random angles of the equivalent vectors of the points $A, C$, which are measured from the positive $x$-axis in counterclockwise direction. Note that $\phi_A, \phi_C$ are independent random variables with the same continuous uniform distribution on $[0,2\pi]$.

Let $\gamma_A$ be the random angle between the vector induced by the point $A$ and  the segment $s_1$, which is measured from the vector $A$ to the segment $s_1$ in counterclockwise direction. Similarly, we define $\gamma_C$ for the point $C$ and segment $s_2$; see Figure \ref{fig301020192}. We emphasize that the angle $\gamma_C$ is measured from the vector $C$ to the segment $s_2$ in counterclockwise direction. Note that  $\gamma_A$ and $\gamma_C$ are iid random variables whith distribution supported in $[0,2\pi]$. The random variables $\phi_A, \phi_C, \gamma_A, \gamma_C$ are independent and have different distributions. One might guess a continuous uniform distribution for $\gamma_A$, but this is not the case. The density  $f_{\gamma_A}$ of $\gamma_A$   is
\begin{eqnarray} \label{eqn150520191652}
f_{\gamma_A}(x)  & = & \frac{1}{4\pi \abs{\sin^3 x}} \lC \abs{\sin x} \lP -2\cos^4 x -2\cos^3 x\abs{\cos x} \right.\right. \nonumber \\
& & \left.\left.+ \cos^2 x + \cos x \abs{\cos x} + 1 \rP + \arcsin\lP \abs{\sin x}\rP\cos x\rC.
\end{eqnarray} For details of deduction of $f_{\gamma_A}$ see section 5 in \cite{Bettstetter2004}.

We introduce the following notation. For $a,b\in\R$ we write $a \equiv b\;\textnormal{mod}\; \pi$ if there exists $k\in\Z$ such that $a-b=k\pi$.
\begin{proposition}
The random angle between segments in the circle  has the same law as the random variable
\begin{equation}\label{eqn200502191853}
\lP \phi_A - \phi_C \rP + \lP \gamma_A - \gamma_C\rP \;\textnormal{mod}\; \pi.
\end{equation} 
\end{proposition}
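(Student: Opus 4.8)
The plan is to establish the identity appearing in \eqref{eqn200502191853} \emph{pointwise} on the common probability space that carries $A,B,C,D$, from which equality in law is immediate. The only substantive ingredient is the geometric meaning of the angles $\phi_A,\gamma_A$ (respectively $\phi_C,\gamma_C$): together they already encode the direction of the segment $s_1$ (respectively $s_2$), and hence the direction of the line supporting it.

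First I would fix conventions. Let $\Theta_1\in[0,\pi)$ denote the angle that the line through $A$ and $B$ makes with the positive $x$-axis, and $\Theta_2\in[0,\pi)$ the analogous angle for the line through $C$ and $D$; by definition the angle between the two segments, being an angle between lines, is $\Theta_1-\Theta_2\;\textnormal{mod}\;\pi$. By construction the position vector of $A$ has argument $\phi_A$, and $\gamma_A$ is the counterclockwise rotation that carries this vector onto the direction of $s_1$ pointing from $A$ towards $B$; hence the direction of $s_1$ equals $\phi_A+\gamma_A$ modulo $2\pi$, so in particular $\Theta_1\equiv\phi_A+\gamma_A\;\textnormal{mod}\;\pi$. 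The identical argument applied to $C$, $D$ and $s_2$ gives $\Theta_2\equiv\phi_C+\gamma_C\;\textnormal{mod}\;\pi$. Subtracting the two congruences,
\[
\Theta_1-\Theta_2\;\equiv\;(\phi_A-\phi_C)+(\gamma_A-\gamma_C)\;\textnormal{mod}\;\pi,
\]
which is precisely the random variable displayed in \eqref{eqn200502191853}; since this holds surely, the two laws coincide.

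I expect the main, and essentially only, difficulty to be bookkeeping with the angular conventions: a line has a direction defined only modulo $\pi$, whereas $\gamma_A$ ranges over $[0,2\pi)$, so one must be careful to identify $\Theta_1$ with the \emph{line} direction $\phi_A+\gamma_A$ reduced modulo $\pi$ rather than with the ray direction, and to observe that the construction could equally have begun from the other endpoint of $s_1$ without altering $\Theta_1$. Finally, although the equality in law requires no independence assumption, the convolution viewpoint that motivates \eqref{eqn200502191853} does use that $\phi_A,\phi_C,\gamma_A,\gamma_C$ are independent with the distributions recorded above; I would justify this from the rotational invariance of the uniform law on $\ball$: rotating the whole configuration by an angle $\psi$ sends $\phi_A\mapsto\phi_A+\psi$ while leaving $\gamma_A$ unchanged, so $\phi_A$ is uniform on $[0,2\pi)$ and independent of $\gamma_A$, while $(\phi_A,\gamma_A)$ is independent of $(\phi_C,\gamma_C)$ because it is built from a disjoint pair of points; the density $f_{\gamma_A}$ is then the one computed in \cite{Bettstetter2004} and recorded in \eqref{eqn150520191652}.
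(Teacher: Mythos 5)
Your proof is correct, and it takes a genuinely different route from the paper's. You derive the congruence by computing the two line directions directly: the ray of $s_1$ based at $A$ has direction $\phi_A+\gamma_A$ modulo $2\pi$ by the very definition of $\gamma_A$, hence the supporting line has direction $\phi_A+\gamma_A$ modulo $\pi$, and subtracting the analogous identity for $s_2$ gives \eqref{eqn200502191853} as a \emph{sure} (pointwise) identity. The paper instead forms the quadrilateral with vertices $O$, $A$, $z$, $C$ and uses that its inner angles sum to $2\pi$, expressing $\angle O$ through $\phi_A-\phi_C$ and $\angle A+\angle C$ through $\gamma_A-\gamma_C$ (each up to an integer multiple of $\pi$), then solves for $\angle z$; this forces a case distinction between simple and complex quadrilaterals, the second of which is only sketched. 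Your argument buys a cleaner proof: all sign ambiguities and the simple/complex dichotomy are absorbed at once by working modulo $\pi$ with line directions, and no angle-sum formula is needed. What it gives up is only the explicit picture of where the intersection point $z$ sits, which the paper's quadrilateral construction makes visible. Your closing remarks on the independence of $\phi_A,\phi_C,\gamma_A,\gamma_C$ via rotational invariance are sound but not needed for the equality in law itself; the paper likewise records those distributional facts before the proposition rather than inside its proof.
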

\begin{proof}
Let $z$ be the intersection point determined by $s_1$ and $s_2$. Let $O$ be the origin in $\R^2$.  Consider the quadrilateral $Q$ induced by the points $O,A,B,z$. The inner angle in the vertex $O$ of $Q$ is the difference between $\phi_A$ and $\phi_C$. Then, 
\[
\angle O = \phi_A - \phi_C + k_1 \pi, 
\] 
where $k_1\in\Z$ is a random variable which depends on $\phi_A,\phi_C$. Moreover, $\phi_A - \phi_C$ is a symmetric random variable, i.e., $\phi_A - \phi_C \stackrel{D}{=}\phi_C - \phi_A$, where $\stackrel{D}{=}$ means ``equal in distribution''.

If the quadrilateral $Q$ is simple (see Figure \ref{fig301020192}, panel (a)), by our convention on the direction of $\gamma_A$ and $\gamma_C$, the sum of inner angle in the vertices $A,C$ is
$$
\angle A + \angle C = \gamma_A - \gamma_C + k_2\pi,
$$ where $k_2\in\Z$ is a random variable which depends on $\gamma_A,\gamma_C$. Note   that  $\gamma_A - \gamma_C$ is also a symmetric random variable. Hence, the inner angle in the vertex $z$ of $Q$ is
\begin{eqnarray}\label{eqn200520191839}
\angle z & = & 2\pi - \angle O -  \angle A -  \angle C \nonumber \\ 
& = & \lP \phi_A - \phi_C \rP + \lP \gamma_A - \gamma_C\rP + k_3\pi,
\end{eqnarray} where $k_3\in\Z$ is a random variable which depends on $\phi_A,\phi_C, \gamma_A,\gamma_C$.

\begin{figure}[h]
\centering
\captionsetup{justification=centering}
\scalebox{0.4}{\includegraphics{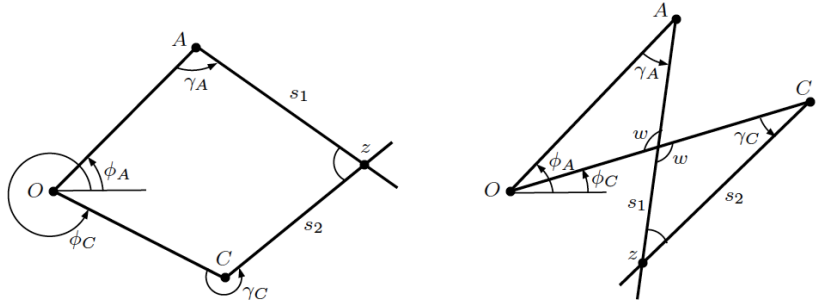}}
\caption{(a) A simple quadrilateral (b) A complex quadrilateral.}\label{fig301020192}
\end{figure}

In the case of a complex quadrilateral (see Figure \ref{fig301020192}, panel (b)), we  obtain a similar expression as in \eqref{eqn200520191839}.  Hence,
\[
\angle z \equiv \lP \phi_A - \phi_C \rP + \lP \gamma_A - \gamma_C\rP \;\textnormal{mod}\; \pi.
\]

Note that $\Theta \equiv \angle z\;\textnormal{mod}\; \pi$. Thus, \eqref{eqn200502191853} follows as claimed.
\end{proof}

\appendix
\section{Partition of Second Integral}\label{appSecondIntegral}
We use the notation $s_{\theta-\rho}:= \sin(\theta-\rho)$, $s_{\theta+\rho}:=\sin(\theta+\rho)$, and $s_{\theta-\beta_A}:=\sin(\theta-\beta_A)$. For the second integral in the right hand side of (\ref{eqn260820191911}). As mentioned in  Remark \ref{rem05102019} it is sufficient to take  $\theta\in[0,\pi/2]$. We have
\begin{equation*}
\int_{-\pi/2}^{\pi/2} \left[ \int_{0}^{2\pi} \int_{0}^{1} r_A^3 \mathds{1}_{\mathcal{C}} dr_A d\beta_A\right] \cos(\rho) d\rho = 
\left\{
\begin{array}{ll}
M_1+M_2+M_3+M_4 & \mbox{\;\; $\theta\in[0,\pi/6]$} \\ [1mm]
I_1+I_2+I_3+I_4 & \mbox{\;\; $\theta\in[\pi/6,\pi/4]$} \\ [1mm]
J_1+J_2+J_3+J_4 & \mbox{\;\; $\theta\in[\pi/4,\pi/3]$} \\ [1mm]
L_1+L_2+L_3+L_4 & \mbox{\;\; $\theta\in[\pi/3,\pi/2]$} 
\end{array},
\right.
\end{equation*} where:\\

\newpage
For $\theta\in[0,\pi/6]$

\begin{eqnarray}
& & M_1 = \int_0^\theta \left[ \integral{0}{\rho}{0}{\frac{s_{\theta-\rho}}{s_{\theta-\beta_A}}} + \integral{\rho}{2\theta+\rho}{0}{1} \right. \nonumber\\
& & \hspace{1.5cm} \left. + \integral{2\theta+\rho}{\pi-\rho}{0}{-\frac{s_{\theta+\rho}}{s_{\theta-\beta_A}}} + \integral{\pi-\rho}{\pi+2\theta-\rho}{0}{1} \right. \nonumber \\
& & \hspace{1.5cm} \left. + \integral{\pi+2\theta-\rho}{2\pi}{0}{\frac{s_{\theta-\rho}}{s_{\theta-\beta_A}}} \right] \cos(\rho) d\rho \nonumber \\
& & \hspace{0.5cm} + \int_{-\theta}^{0} \left[ \integral{0}{2\theta+\rho}{0}{1} + \integral{2\theta+\rho}{\pi-\rho}{0}{-\frac{s_{\theta+\rho}}{s_{\theta-\beta_A}}} \right. \nonumber\\
& &\hspace{1.5cm} \left. \integral{\pi-\rho}{\pi+2\theta-\rho}{0}{1} + \integral{\pi+2\theta-\rho}{2\pi+\rho}{0}{\frac{s_{\theta-\rho}}{s_{\theta-\beta_A}}}  \right. \nonumber \\
& & \hspace{1.5cm} \left.  \integral{2\pi+\rho}{2\pi}{0}{1} \right] \cos(\rho) d\rho. \nonumber \\
\end{eqnarray}

\begin{eqnarray}
& & M_2 = \int_{\theta}^{2\theta} \left[ \integral{\rho}{2\theta+\rho}{\frac{s_{\theta-\rho}}{s_{\theta-\beta_A}}}{1} + \integral{2\theta+\rho}{\pi-\rho}{\frac{s_{\theta-\rho}}{s_{\theta-\beta_A}}}{-\frac{s_{\theta+\rho}}{s_{\theta-\beta_A}}} \right. \nonumber \\
& & \hspace{1.5cm} \left. + \integral{\pi-\rho}{\pi-\rho+2\theta}{\frac{s_{\theta-\rho}}{s_{\theta-\beta_A}}}{1}   \right] \cos(\rho)\nonumber \\
& & \hspace{0.5cm} + \int_{-2\theta}^{-\theta} \left[ \integral{0}{2\theta+\rho}{-\frac{s_{\theta+\rho}}{s_{\theta-\beta_A}}}{1} + \integral{\pi-\rho}{\pi-\rho+2\theta}{-\frac{s_{\theta+\rho}}{s_{\theta-\beta_A}}}{1} \right. \nonumber \\
& & \hspace{1.5cm} \left. + \integral{\pi-\rho+2\theta}{2 \pi+ \rho}{-\frac{s_{\theta+\rho}}{s_{\theta-\beta_A}}}{\frac{s_{\theta - \rho}}{s_{\theta - \beta_A}}} + \integral{2\pi+\rho}{2\pi}{-\frac{s_{\theta + \rho}}{s_{\theta - \beta_A}}}{1} \right] \cos(\rho) d\rho. \nonumber \\
\end{eqnarray}

\begin{eqnarray}
& & M_3 = \int_{2\theta}^{\pi/2-\theta} \left[ \integral{\rho}{2\theta+\rho}{\frac{s_{\theta - \rho}}{s_{\theta - \beta_A}}}{1} + \integral{2\theta+\rho}{\pi-\rho}{\frac{s_{\theta - \rho}}{s_{\theta - \beta_A}}}{-\frac{s_{\theta + \rho}}{s_{\theta - \beta_A}}} \right. \nonumber \\
& & \hspace{1.5cm} \left. + \integral{\pi-\rho}{\pi+2\theta-\rho}{\frac{s_{\theta-\rho}}{s_{\theta-\beta_A}}}{1} \right] \cos(\rho) d\rho \nonumber \\
& & \hspace{0.5cm} + \int_{-\pi/2+\theta}^{-2\theta} \left[ \integral{\pi-\rho}{\pi+2\theta-\rho}{-\frac{s_{\theta+\rho}}{s_{\theta-\beta_A}}}{1} + \integral{\pi+2\theta-\rho}{2\pi + \rho}{-\frac{s_{\theta + \rho}}{s_{\theta - \beta_A}}}{\frac{s_{\theta-\rho}}{s_{\theta-\beta_A}}} \right. \nonumber \\
& & \hspace{1.5cm} \left. + \integral{2\pi + \rho}{2\pi + 2\theta + \rho}{-\frac{s_{\theta+\rho}}{s_{\theta-\beta_A}}}{1} \right] \cos(\rho) d\rho. \nonumber \\
\end{eqnarray}

\begin{eqnarray}
& & M_4 = \int_{\pi/2-\theta}^{\pi/2} \left[ \integral{\rho}{\pi-\rho}{\frac{s_{\theta - \rho}}{s_{\theta - \beta_A}}}{1} + \integral{\pi-\rho}{2\theta+\rho}{\frac{s_{\theta - \rho}}{s_{\theta - \beta_A}}}{-\frac{s_{\theta + \rho}}{s_{\theta - \beta_A}}} \right. \nonumber \\
& & \hspace{1.5cm} \left. + \integral{2\theta+\rho}{\pi+2\theta-\rho}{\frac{s_{\theta-\rho}}{s_{\theta-\beta_A}}}{1} \right] \cos(\rho) d\rho \nonumber \\
& & \hspace{0.5cm} + \int_{-\pi/2}^{-\pi/2+\theta} \left[ \integral{\pi-\rho}{2\pi+\rho}{-\frac{s_{\theta+\rho}}{s_{\theta-\beta_A}}}{1} + \integral{2\pi+\rho}{\pi+2\theta-\rho}{-\frac{s_{\theta + \rho}}{s_{\theta - \beta_A}}}{\frac{s_{\theta-\rho}}{s_{\theta-\beta_A}}} \right. \nonumber \\
& & \hspace{1.5cm} \left. + \integral{\pi+2\theta-\rho}{2\pi+2\theta+\rho}{-\frac{s_{\theta+\rho}}{s_{\theta-\beta_A}}}{1} \right] \cos(\rho) d\rho. \nonumber \\
\end{eqnarray}

For $\theta\in[\pi/6,\pi/4]$
\begin{eqnarray}
& & I_1 = \int_{0}^{\theta} \left[ \integral{0}{\rho}{0}{\frac{s_{\theta-\rho}}{s_{\theta-\beta_A}}} + \integral{\rho}{2\theta + \rho}{0}{1} \right. \nonumber \\
& & \hspace{1.5cm} \left. + \integral{2\theta+\rho}{\pi-\rho}{0}{-\frac{s_{\theta+\rho}}{s_{\theta-\beta_A}}} + \integral{\pi- \rho}{\pi + 2\theta - \rho}{0}{1}  \right. \nonumber\\
& & \hspace{1.5cm} \left. + \integral{\pi + 2\theta - \rho}{2\pi}{0}{\frac{s_{\theta- \rho}}{s_{\theta - \beta_A}}} \right] \cos(\rho) d\rho \nonumber \\
& & \hspace{0.5cm} + \int_{-\theta}^{0} \left[ \integral{0}{2\theta + \rho}{0}{1} + \integral{2\theta + \rho}{\pi-\rho}{0}{-\frac{s_{\theta+\rho}}{s_{\theta-\beta_A}}} \right. \nonumber \\
& & \hspace{1.5cm} \left. + \integral{\pi-\rho}{\pi+2\theta-\rho}{0}{1} + \integral{\pi+2\theta -\rho}{2\pi+\rho}{0}{\frac{s_{\theta-\rho}}{s_{\theta-\beta_A}}}\right. \nonumber \\
& & \hspace{1.5cm} \left. + \integral{2\pi+\rho}{2\pi}{0}{1} \right] \cos(\rho) d\rho. \nonumber \\
\end{eqnarray}

\begin{eqnarray}
& & I_2 = \int_{\theta}^{\pi/2-\theta} \left[ \integral{\rho}{\rho+2\theta}{\frac{s_{\theta-\rho}}{s_{\theta-\beta_A}}}{1} + \integral{\rho+2\theta}{\pi-\rho}{\frac{s_{\theta-\rho}}{s_{\theta-\beta_A}}}{-\frac{s_{\theta+\rho}}{s_{\theta-\beta_A}}} \right. \nonumber \\
& & \hspace{1.5cm} \left. + \integral{\pi-\rho}{\pi+2\theta-\rho}{\frac{s_{\theta-\rho}}{s_{\theta-\beta_A}}}{1} \right] \cos(\rho) d\rho \nonumber \\
& & \hspace{0.5cm} + \int_{-\pi/2+\theta}^{-\theta} \left[ \integral{0}{2\theta+\rho}{-\frac{s_{\theta+\rho}}{s_{\theta-\beta_A}}}{1} + \integral{\pi-\rho}{\pi+2\theta-\rho}{-\frac{s_{\theta+\rho}}{s_{\theta-\beta_A}}}{1} \right. \nonumber\\
& & \hspace{1.5cm} \left. +�\integral{\pi+2\theta-\rho}{2\pi+\rho}{-\frac{s_{\theta+\rho}}{s_{\theta-\beta_A}}}{\frac{s_{\theta-\rho}}{s_{\theta-\beta_A}}} + \integral{2\pi+\rho}{2\pi}{-\frac{s_{\theta+\rho}}{s_{\theta-\beta_A}}}{1}  \right] \cos(\rho) d\rho. \nonumber \\
\end{eqnarray}

\begin{eqnarray}
& & I_3 = \int_{\pi/2-\theta}^{2\theta} \left[ \integral{\rho}{\pi-\rho}{\frac{s_{\theta-\rho}}{s_{\theta-\beta_A}}}{1}  + \integral{\pi-\rho}{2\theta+\rho}{\frac{s_{\theta-\rho}}{s_{\theta-\beta_A}}}{-\frac{s_{\theta+\rho}}{s_{\theta-\beta_A}}} \right. d\rho \nonumber \\
& & \hspace{1.5cm} \left. + \integral{2\theta+\rho}{\pi+2\theta-\rho}{\frac{s_{\theta-\rho}}{s_{\theta-\beta_A}}}{1} \right] \cos(\rho) d\rho \nonumber \\
& & \hspace{0.5cm} + \int_{-2\theta}^{-\pi/2+\theta} \left[ \integral{0}{2\theta+\rho}{-\frac{s_{\theta+\rho}}{s_{\theta-\beta_A}}}{1} + \integral{\pi-\rho}{2\pi+\rho}{-\frac{s_{\theta+\rho}}{s_{\theta-\beta_A}}}{1} \right. \nonumber \\
& & \hspace{1.5cm} \left. + \integral{2\pi+\rho}{\pi+2\theta-\rho}{-\frac{s_{\theta+\rho}}{s_{\theta-\beta_A}}}{\frac{s_{\theta-\rho}}{s_{\theta-\beta_A}}} + \integral{\pi+2\theta-\rho}{2\pi}{-\frac{s_{\theta+\rho}}{s_{\theta-\beta_A}}}{1}  \right] \cos(\rho) d\rho. \nonumber\\
\end{eqnarray}

\begin{eqnarray}
& & I_4= \int_{2\theta}^{\pi/2} \left[ \integral{\rho}{\pi-\rho}{\frac{s_{\theta-\rho}}{s_{\theta-\beta_A}}}{1} + \integral{\pi-\rho}{2\theta+\rho}{\frac{s_{\theta-\rho}}{s_{\theta-\beta_A}}}{-\frac{s_{\theta+\rho}}{s_{\theta-\beta_A}}} \right. \nonumber \\
& & \hspace{1.5cm} \left. + \integral{2\theta+\rho}{2\theta+\pi-\rho}{\frac{s_{\theta-\rho}}{s_{\theta-\beta_A}}}{1} \right] \cos(\rho) d\rho \nonumber \\
& & \hspace{0.5cm} + \int_{-\pi/2}^{-2\theta} \left[ \integral{\pi-\rho}{2\pi+\rho}{-\frac{s_{\theta+\rho}}{s_{\theta-\beta_A}}}{1} + \integral{2\pi+\rho}{\pi+2\theta-\rho}{-\frac{s_{\theta+\rho}}{s_{\theta-\beta_A}}}{\frac{s_{\theta-\rho}}{s_{\theta-\beta_A}}} \right. \nonumber \\
& & \hspace{1.5cm} \left. + \integral{\pi+2\theta-\rho}{2\pi+2\theta+\rho}{-\frac{s_{\theta+\rho}}{s_{\theta-\beta_A}}}{1} \right]  \cos(\rho) d\rho. \nonumber \\
\end{eqnarray}

\newpage
For $\theta\in[\pi/4,\pi/3]$, 

\begin{eqnarray}
& & J_1 = \int_{0}^{\pi/2-\theta} \left[ \integral{0}{\rho}{0}{\frac{s_{\theta-\rho}}{s_{\theta-\beta_A}}} + \integral{\rho}{2\theta+\rho}{0}{1} \right. d\rho \nonumber \\
& & \hspace{1.5cm} \left. + \integral{2\theta+\rho}{\pi-\rho}{0}{-\frac{s_{\theta+\rho}}{s_{\theta-\beta_A}}}  + \integral{\pi-\rho}{\pi+2\theta-\rho}{0}{1} \right. \nonumber \\
& & \hspace{1.5cm} \left. + \integral{\pi+2\theta-\rho}{2\pi}{0}{\frac{s_{\theta-\rho}}{s_{\theta-\beta_A}}} \right] \cos(\rho) d\rho \nonumber \\
& & \hspace{0.5cm} + \int_{\theta-\pi/2}^{0} \left[�\integral{0}{2\theta+\rho}{0}{1} + \integral{2\theta+\rho}{\pi-\rho}{0}{-\frac{s_{\theta+\rho}}{s_{\theta-\beta_A}}} \right. d\rho \nonumber \\
& & \hspace{1.5cm} \left. + \integral{\pi-\rho}{\pi+2\theta-\rho}{0}{1} + \integral{\pi+2\theta-\rho}{2\pi+\rho}{0}{\frac{s_{\theta-\rho}}{s_{\theta-\beta_A}}} \right. \nonumber \\
& & \hspace{1.5cm} \left. + \integral{2\pi+\rho}{2\pi}{0}{1} \right] \cos(\rho) d\rho. \nonumber \\ 
\end{eqnarray}

\begin{eqnarray}
& & J_2 = \int_{\pi/2-\theta}^{\theta} \left[ \integral{0}{\rho}{0}{\frac{s_{\theta-\rho}}{s_{\theta-\beta_A}}} + \integral{\rho}{\pi-\rho}{0}{1} \right. \nonumber \\
& & \hspace{1.5cm} \left. + \integral{\pi-\rho}{2\theta+\rho}{0}{-\frac{s_{\theta+\rho}}{s_{\theta-\beta_A}}} + \integral{2\theta+\rho}{\pi+2\theta-\rho}{0}{1} \right. \nonumber \\
& & \hspace{1.5cm} \left. + \integral{\pi+2\theta-\rho}{2\pi}{0}{\frac{s_{\theta-\rho}}{s_{\theta-\beta_A}}} \right] \cos(\rho) d\rho \nonumber \\
& & \hspace{0.5cm} + \int_{-\theta}^{\theta-\pi/2} \left[ \integral{0}{2\theta+\rho}{0}{1} + \integral{\theta+\rho}{\pi-\rho}{0}{-\frac{s_{\theta+\rho}}{s_{\theta-\beta_A}}} \right. \nonumber \\
& & \hspace{1.5cm} \left. + \integral{\pi-\rho}{2\pi+\rho}{0}{1} + \integral{2\pi+\rho}{\pi+2\theta-\rho}{0}{\frac{s_{\theta-\rho}}{s_{\theta-\beta_A}}} \right. \nonumber \\
& & \hspace{1.5cm} + \integral{\pi+2\theta-\rho}{2\pi}{0}{1} \left. \right] \cos(\rho) d\rho. 
\end{eqnarray}

\begin{eqnarray}
& & J_3 = \int_{\theta}^{\pi-2\theta} \left[ \integral{\rho}{\pi-\rho}{\frac{s_{\theta-\rho}}{s_{\theta-\beta_A}}}{1} + \integral{\pi-\rho}{2\theta+\rho}{\frac{s_{\theta-\rho}}{s_{\theta-\beta_A}}}{-\frac{s_{\theta+\rho}}{s_{\theta-\beta_A}}} \right. \nonumber \\
& & \hspace{1.5cm} \left. + \integral{2\theta+\rho}{\pi+2\theta-\rho}{\frac{s_{\theta-\rho}}{s_{\theta-\beta_A}}}{1} \right] \cos(\rho) d\rho \nonumber \\
& & \hspace{0.5cm} + \int_{2\theta-\pi}^{-\theta} \left[ \integral{0}{2\theta+\rho}{-\frac{s_{\theta+\rho}}{s_{\theta-\beta_A}}}{1} +\integral{\pi-\rho}{2\pi+\rho}{-\frac{s_{\theta+\rho}}{s_{\theta-\beta_A}}}{1} \right. \nonumber \\ 
& & \hspace{1.5cm} \left. + \integral{2\pi+\rho}{\pi+2\theta-\rho}{-\frac{s_{\theta+\rho}}{s_{\theta-\beta_A}}}{\frac{s_{\theta-\rho}}{s_{\theta-\beta_A}}} + \integral{\pi+2\theta-\rho}{2\pi}{-\frac{s_{\theta+\rho}}{s_{\theta-\beta_A}}}{1} \right] \cos(\rho) d\rho. \nonumber \\
\end{eqnarray}

\begin{eqnarray}
& & J_4 = \int_{\pi-2\theta}^{\pi/2} \left[ \integral{\rho}{\pi-\rho}{\frac{s_{\theta-\rho}}{s_{\theta-\beta_A}}}{1} + \integral{\pi-\rho}{2\theta+\rho}{\frac{s_{\theta-\rho}}{s_{\theta-\beta_A}}}{-\frac{s_{\theta+\rho}}{s_{\theta-\beta_A}}}\right. \nonumber\\
& & \hspace{1.5cm} \left. + \integral{2\theta+\rho}{\pi+2\theta-\rho}{\frac{s_{\theta-\rho}}{s_{\theta-\beta_A}}}{1} \right] \cos(\theta) d\rho \nonumber \\
& & \hspace{0.5cm} + \int_{-\pi/2}^{2\theta-\pi} \left[ \integral{0}{2\theta-\rho-\pi}{-\frac{s_{\theta+\rho}}{s_{\theta-\beta_A}}}{\frac{s_{\theta-\rho}}{s_{\theta-\beta_A}}} + \integral{2\theta-\rho-\pi}{2\theta+\rho}{-\frac{s_{\theta+\rho}}{s_{\theta-\beta_A}}}{1} \right. \nonumber \\
& & \hspace{1.5cm} \left. + \integral{\pi-\rho}{2\pi+\rho}{-\frac{s_{\theta+\rho}}{s_{\theta-\beta_A}}}{1} + \integral{2\pi+\rho}{2\pi}{-\frac{s_{\theta+\rho}}{s_{\theta-\beta_A}}}{\frac{s_{\theta-\rho}}{s_{\theta-\beta_A}}} \right] \cos(\rho) d\rho. \nonumber \\
\end{eqnarray}

\newpage
For $\theta\in[\pi/3,\pi/2]$

\begin{eqnarray}
& & L_1 = \int_{0}^{\pi/2-\theta} \left[ \integral{0}{\rho}{0}{\frac{s_{\theta-\rho}}{s_{\theta-\beta_A}}} + \integral{\rho}{2\theta+\rho}{0}{1} \right. \nonumber \\
& & \hspace{1.5cm} \left. + \integral{2\theta+\rho}{\pi-\rho}{0}{-\frac{s_{\theta+\rho}}{s_{\theta-\beta_A}}} + \integral{\pi-\rho}{\pi+2\theta-\rho}{0}{1} \right. \nonumber \\
& & \hspace{1.5cm} \left. + \integral{\pi+2\theta-\rho}{2\pi}{0}{\frac{s_{\theta-\rho}}{s_{\theta-\beta_A}}} \right] \cos(\rho) d\rho \nonumber \\
& &  \hspace{0.5cm} + \int_{\theta-\pi/2}^{0}\left[ \integral{0}{2\theta+\rho}{0}{1} + \integral{2\theta+\rho}{\pi-\rho}{0}{-\frac{s_{\theta+\rho}}{s_{\theta-\beta_A}}} \right. \nonumber \\
& & \hspace{1.5cm} \left. + \integral{\pi-\rho}{\pi+2\theta-\rho}{0}{1} + \integral{\pi+2\theta-\rho}{2\pi+\rho}{0}{\frac{s_{\theta-\rho}}{s_{\theta-\beta_A}}} \right. \nonumber \\
& & \hspace{1.5cm} \left. + \integral{2\pi+\rho}{2\pi}{0}{1} \right] \cos(\rho) d\rho. \nonumber \\
\end{eqnarray}

\begin{eqnarray}
& & L_2 = \int_{\pi/2-\theta}^{\pi-2\theta} \left[ \integral{0}{\rho}{0}{\frac{s_{\theta-\rho}}{s_{\theta-\beta_A}}} + \integral{\rho}{\pi-\rho}{0}{1} \right. \nonumber \\
& & \hspace{1.5cm} \left. + \integral{\pi-\rho}{2\theta-\rho}{0}{-\frac{s_{\theta+\rho}}{s_{\theta-\beta_A}}} + \integral{2\theta+\rho}{\pi+2\theta-\rho}{0}{1} \right.\nonumber \\
& &\hspace{1.5cm} \left. +\integral{\pi+2\theta-\rho}{2\pi}{0}{\frac{s_{\theta-\rho}}{s_{\theta-\beta_A}}} \right] \cos(\rho) d\rho \nonumber \\
& & \hspace{0.5cm} + \int_{2\theta-\pi}^{\theta-\pi/2} \left[ \integral{0}{2\theta+\rho}{0}{1} + \integral{2\theta+\rho}{\pi-\rho}{0}{-\frac{s_{\theta+\rho}}{s_{\theta-\beta_A}}} \right. \nonumber\\
& &\hspace{1.5cm} \left. \integral{\pi-\rho}{2\pi+\rho}{0}{1} + \integral{2\pi+\rho}{\pi+2\theta-\rho}{0}{\frac{s_{\theta-\rho}}{s_{\theta-\beta_A}}} \right. \nonumber \\
& & \hspace{1.5cm} \left. + \integral{\pi+2\theta-\rho}{2\pi}{0}{1} \right] \cos(\rho) d\rho. \nonumber \\\end{eqnarray}

\begin{eqnarray}
& & L_3 = \int_{\pi-2\theta}^{\theta} \left[ \integral{0}{\rho}{0}{\frac{s_{\theta-\rho}}{s_{\theta-\beta_A}}} + \integral{\rho}{\pi-\rho}{0}{1} \right. \nonumber \\
& & \hspace{1.5cm} \left. + \integral{\pi-\rho}{2\theta+\rho}{0}{-\frac{s_{\theta+\rho}}{s_{\theta-\beta_A}}} + \integral{2\theta+\rho}{\pi+2\theta-\rho}{0}{1} \right. \nonumber \\
& & \hspace{1.5cm} \left. + \integral{\pi+2\theta-\rho}{2\pi}{0}{\frac{s_{\theta-\rho}}{s_{\theta-\beta_A}}} \right] \cos(\rho) d\rho \nonumber \\
& &  \hspace{0.5cm} + \int_{-\theta}^{2\theta-\pi}\left[ \integral{0}{2\theta-\pi-\rho}{0}{\frac{s_{\theta-\rho}}{s_{\theta-\beta_A}}} + \integral{2\theta-\pi-\rho}{2\theta+\rho}{0}{1} \right. \nonumber \\
& & \hspace{1.5cm} \left. + \integral{2\theta+\rho}{\pi-\rho}{0}{-\frac{s_{\theta+\rho}}{s_{\theta-\beta_A}}} + \integral{\pi-\rho}{2\pi+\rho}{0}{1} \right. \nonumber \\
& & \hspace{1.5cm} \left. + \integral{2\pi+\rho}{2\pi}{0}{\frac{s_{\theta-\rho}}{s_{\theta-\beta_A}}} \right] \cos(\rho) d\rho. \nonumber \\
\end{eqnarray}

\begin{eqnarray}
& & L_4 = \int_{\theta}^{\pi/2} \left[ \integral{\rho}{\pi-\rho}{\frac{s_{\theta-\rho}}{s_{\theta-\beta_A}}}{1} + \integral{\pi-\rho}{2\theta+\rho}{\frac{s_{\theta-\rho}}{s_{\theta-\beta_A}}}{-\frac{s_{\theta+\rho}}{s_{\theta-\beta_A}}} \right. \nonumber \\
& & \hspace{1.5cm} \left. + \integral{2\theta+\rho}{\pi+2\theta-\rho}{\frac{s_{\theta-\rho}}{s_{\theta-\beta_A}}}{1} \right] \cos(\rho) d\rho \nonumber \\
& & \hspace{0.5cm} + \int_{-\pi/2}^{-\theta} \left[ \integral{0}{2\theta-\pi-\rho}{-\frac{s_{\theta+\rho}}{s_{\theta-\beta_A}}}{\frac{s_{\theta-\rho}}{s_{\theta-\beta_A}}} + \integral{2\theta-\pi-\rho}{2\theta+\rho}{-\frac{s_{\theta+\rho}}{s_{\theta-\beta_A}}}{1} \right. \nonumber\\
& & \hspace{1.5cm} \left. + \integral{\pi-\rho}{2\pi+\rho}{-\frac{s_{\theta+\rho}}{s_{\theta-\beta_A}}}{1} +\integral{2\pi+\rho}{2\pi}{-\frac{s_{\theta+\rho}}{s_{\theta-\beta_A}}}{\frac{s_{\theta-\rho}}{s_{\theta-\beta_A}}} \right] \cos(\rho) d\rho. \nonumber \\
\end{eqnarray}


\bibliographystyle{plain}
\bibliography{01_references}

\end{document}